\newfont{\aj}{eufm10 at12pt}
\newfont{\ajk}{eufm10 at10pt}
\theoremstyle{plain}
\newtheorem{theorem}{Theorem}[section]
\newtheorem{lemma}[theorem]{Lemma}
\newtheorem{corollary}[theorem]{Corollary}
\numberwithin{equation}{section}
\theoremstyle{definition}
\newtheorem{definition}[theorem]{Definition}
\newtheorem{remark}[theorem]{Remark}
\begin{document}

%\noindent {\footnotesize\tiny Extended Abstracts of the 42$^{\rm
%nd}$ Annual Iranian Mathematics Conference\\
%5-8 September 2011, Vali-e-Asr University of  Rafsanjan, Iran, pp 00-00}\\[1.00in]
%\maketitle
\title[Adding Machine Maps and Minimal Sets]{Adding Machine Maps and Minimal Sets for Iterated Function Systems}
\author[Mehdi Fatehi Nia]{ {{\bf {Mehdi Fatehi Nia}}
\\{\tiny{Department of Mathematics, Yazd University, 89195-741 Yazd, Iran}
\\e-mail: fatehiniam@yazd.ac.ir }}}
 \subjclass[2010]{ 37C50,37C15}

%\indent $^{*}$ Speaker
\keywords{Adding machine map, iterated function systems, minimal set, regularly recurrent, topological conjugacy}\maketitle

\begin{abstract}%
In this paper, we focus attention on extending the topological conjugacy of adding machine maps and minimal systems to iterated function systems. We provide necessary and sufficient conditions for an iterated function system  to be conjugated to
an adding machine map. It is proved that every minimal iterated function system which has some  non-periodic regular point is semi-conjugate to an adding machine map. Furthermore, we investigate the topological conjugacy of an infinite family of tent maps, as well as  the restriction of a map to  its $\omega-$limit set with an iterated  function system.
\end{abstract}

%%%% Start
\section{Introduction}
Minimality is an important concept in the study
of dynamical systems. It describes the
irreducibility of a system from the topological point of view, which means that a minimal system has no proper subsystem \cite{[BIR]}. Adding machines
are a fundamental component of discrete dynamical systems \cite{[AB],[BK],[DHSA],[DHS]}.
Let $f :X\rightarrow X$ be a continuous map of a compact metric
space $X$. In \cite{[BK]} the authors give sufficient and necessary conditions for $f$ to be topologically conjugate to an adding machine map. \\Let us recall that an  Iterated Function System(\textbf{IFS}) $\mathcal{F}=\{X; f_{\lambda}|\lambda\in\Lambda\}$ is any family of continuous mappings $f_{\lambda}:X\rightarrow X,~\lambda\in \Lambda$, where $\Lambda$ is a finite nonempty set (see\cite{[B]}). \\Iterated function systems are used for the construction of deterministic fractals and have found numerous applications, in
particular to image compression and image processing \cite{[B]}. Important notions in
dynamics like attractors, minimality, transitivity, and shadowing have been extended to
\textbf{IFS} (see \cite{[BV],[GG],[GG1],[FN]}).
 \\The present paper concerns the minimality and adding machine maps  for  \textbf{IFS}.  In the next section, we introduce certain notation and we give some definitions, which will be used throughout the paper.  In Section $3$ we proceed
with the study of minimality and minimal sets for \textbf{IFS}. Theorem \ref{tha}, which  is the main result of this section,  shows that for a sequence $\alpha=(j_{1},j_{2},...)$ of integers greater than $1$, an \textbf{IFS} is topologically conjugate to adding machine map $g_{\alpha}$, if and only if conditions $(1)-(3)$ of Theorem \ref{tha} are satisfied.
In Section $4$ several lemmas lead to Theorem \ref{ts} which shows that:  If the \textbf{IFS}, $\mathcal{F}=\{X; f_{\lambda}|\lambda\in\Lambda\} $ is minimal, $X$ is infinite and $\mathcal{F}$ has some non-periodic regularly recurrent points, then there is a sequence $\alpha$ of prime numbers and a continuous surjective map $\pi:X\rightarrow  \Delta_{\alpha}$ such that $g_{\alpha}\circ\pi=\pi \circ f_{\lambda}$, for all $\lambda\in \Lambda$. Also, we will present several corollaries of this theorem and its proof.
\section{Preliminaries}
Let $(X,d)$ be a compact metric space and $\mathcal{F}=\{X; f_{\lambda}|\lambda\in\Lambda\}$ be an \textbf{IFS}. \\ We let    $\Lambda^{\mathbb{Z}_{+}}$ denote the set
of all infinite sequences $\{\lambda_{i}\}_{i \geq 1}$ of symbols belonging to $\Lambda$. For a given $\sigma= \{\lambda_{n}\}\in \Lambda^{\mathbb{Z}_{+}}$
we put $ \mathcal{F}_{\sigma}=\{\mathcal{F}_{\sigma_{n}}\}$ where \\$\mathcal{F}_{\sigma_{n}}=f_{\lambda_{n}} \circ \cdots \circ f_{\lambda_{2}}\circ  f_{\lambda_{1}},~~{n\geq 1}$.
 \\A sequence $\{x_{n}\}_{n\geq 1}$ in $X$ is
 called an orbit of the \textbf{IFS} $\mathcal{F}$, if there exists $\sigma\in \Lambda^{\mathbb{Z}_{+}}$ such that $x_{n+1}=f_{\lambda_{n}}(x_{n})$, for each $\lambda_{n}\in \sigma$.\\
 % Given $\delta>0$, a sequence $\{x_{n}\}_{n\geq 1}$ in $X$ is called a $\delta-$chain of $\mathcal{F}$ if there exists
%\sigma\in\Lambda^{\mathbb{Z}_{+}}$ such that $d(x_{n+1},f_{\lambda_{n}}(x_{n}))<\delta$, for every $\lambda_{n}\in \sigma$.\\
For a subset $A\subset X$, we let $\mathcal{F}(A)=\bigcup_{\lambda\in\Lambda}f_{\lambda}(A)$.
\begin{definition}
Suppose that $(X,d)$ and $(Y,d^{'})$ are compact metric spaces. Let $ \mathcal{F}=\{X; f_{\lambda}|\lambda\in\Lambda\}$ and $ \mathcal{G}=\{Y; g_{\lambda}|\lambda\in\Lambda\}$ be two \textbf{IFS} such that  $f_{\lambda}:X\rightarrow X$ and  $g_{\lambda}:Y\rightarrow Y$ are continuous maps, for all $\lambda\in\Lambda$. Consider $\sigma\in\Lambda^{\mathbb{Z}_{+}}$, we say that $ \mathcal{F}_{\sigma}$ is topologically conjugate to $ \mathcal{G}_{\sigma}$  if there is a homeomorphism  $h:X\rightarrow Y$  such that $\mathcal{G}_{\sigma_{n}}\circ~h=h~ \circ \mathcal{F}_{\sigma_{n}} $, for all $n\geq 1$.\end{definition}
By the above definition, for $\sigma\in\Lambda^{\mathbb{Z}_{+}}$ we say that $ \mathcal{F}_{\sigma}$ is topologically conjugate to the map $g:Y\rightarrow Y$, if there is a homeomorphism  $h:X\rightarrow Y$  such that $g^{n}\circ~h=h~ \circ  \mathcal{F}_{\sigma_{n}} $, for all $n\geq 1$.
\\
Let $ \mathcal{F}=\{X; f_{\lambda}|\lambda\in\Lambda\}$ be an \textbf{IFS} and  $k$ be a positive integer.
Set \\$\mathcal{F}^{k}=\{X;f_{\lambda_{k}}\circ \cdots\circ f_{\lambda_{1}}|\lambda_{1},
...,\lambda_{k}\in\Lambda\}$. \\
It should be noted that for every $k>1$, $~\mathcal{F}^{k}$ is an \textbf{IFS} such that its functions and their number  are different from those in $\mathcal{F}$.
\begin{definition}\cite{[GHS]} Let $n$ be a positive integer and $\mathcal{F}=\{X; f_{\lambda}|\lambda\in\Lambda\} $. A nonempty closed set $M\subseteq X$ is called a $\mathcal{F}^{n}-$minimal set if
$\mathcal{F}_{\sigma_{n}}(M)=M$ and  $\mathcal{F}_{\sigma_{n}}(A)\neq A$, for all nonempty sets $A\subset M$ and all $\sigma\in \Lambda^{\mathbb{Z}_{+}}$.
The \textbf{IFS} $\mathcal{F}$ is minimal if the only minimal set for $~\mathcal{F}^{1}$ is the whole space $X$. \\
Equivalently, for a minimal iterated function system $\mathcal{F}=\{X; f_{\lambda}|\lambda\in\Lambda\} $, for any $x \in X$ the collection
of iterates $f_{\lambda_{k}} \circ \cdots \circ f_{\lambda_{1}} (x),~$ $k>0$ and $\lambda_{1},\ldots,\lambda_{k}\in\Lambda$, is dense in $X$.\end{definition}
%Suppose  $ \mathcal{F}=\{X; f_{\lambda}|\lambda\in\Lambda\}$ be an \textbf{IFS} such  that $f_{\lambda}$ is continuous, for all $\lambda\in\Lambda$.
%Lemmas 2.1 and 2.2 of \cite{[GHS]} imply that there exists a  minimal set for $\mathcal{F}$ \textbf{with nonempty interior}.\\

%In some recent works, Niu \cite{[Nb]}, Koscielniak and Maerty and chaotic dynamical systems.
\begin{definition}\label{df}(Adding machine map. \cite{[BK]})
Let $\alpha=(j_{1},j_{2},\ldots)$ be a sequence of integers where each $j_{i}\geq 2$. Let $\Delta_{\alpha}$ denote all sequences $(r_{1},r_{2},\ldots)$ where $r_{i}\in\{0,1,\cdots,j_{i}-1\}$ for each $i$. We put a metric $d_{\alpha}$ on $\Delta_{\alpha}$ given by $d_{\alpha}((r_{1},r_{2},\ldots),(s_{1},s_{2},\cdots))=\Sigma_{i=1}^{\infty}\frac{\delta(r_{i},s_{i})}{2^{i}},$ where $\delta(r_{i},s_{i})=1$ if $r_{i}\neq s_{i}$ and $\delta(r_{i},s_{i})=0$ if $r_{i}= s_{i}.$ Addition in $\Delta_{\alpha}$ is defined as follows:\\
$$(r_{1},r_{2},...)+(s_{1},s_{2},...)=(z_{1},z_{2},\ldots),$$ where $z_{1}=(r_{1}+s_{1})~ mod ~j_{1}$ and $z_{1}=(r_{2}+s_{2}+t_{1})~ mod ~j_{2}$. Here $t_{1}=0$ if $r_{1}+s_{1}<j_{1}$ and $t_{1}=1$ if $r_{1}+s_{1}\geq j_{1}$. Continue adding and carrying in this way for the whole sequence.\\ We define, the adding machine map, $g_{\alpha}:\Delta_{\alpha}\rightarrow \Delta_{\alpha}$ by $$g_{\alpha}((r_{1},r_{2},\ldots))=(r_{1},r_{2},...)+(1,0,0,\ldots).$$
If each $j_{i}=2$ then the system is called the dyadic adding machine.\end{definition}
Let $f:X\rightarrow X$ be a continuous map. A point $x$ is non-wandering point of $f$ if for any neighborhood $U$ of $x$ there exists
$n \in N$ such that $f^{n}(U) \cap U\neq \emptyset$. The set of all non-wandering points of $f$ is denoted as $\Omega(f)$, if $\Omega(f)=X$ then the systems is said to be non-wandering.\\
A sequence $\{x_{n}\}_{n\geq 0}$ is called a $\delta-$pseudo-orbit $(\delta\geq 0)$ of $f$ if
$d(f(x_{n}), x_{n+1})\leq \delta$ for all $n\geq 0$. A map $f$ is said to have the shadowing property if for any $\epsilon>0$ there is $\delta>0$ such that for every $\delta-$pseudo-orbit $\{x_{n}\}_{n\geq 0}$ of $f$, there exists $y\in X$ satisfying $d(f^{n}(y),x_{n})<\epsilon$ for all $n\geq 0$.\\
A dynamical system $(X,f)$ is sensitive if there is $\delta>0$ with the property that for every nonempty open set $U\subset X$, there is $n>0$ such that $diam(f^{n}(U))>\delta$\cite{[SO]}.
\section{Topological conjugacy of adding machine maps and minimal IFS}
 In this section we present some lemmas and
notations needed in the proof of our main results.
\begin{lemma}\label{leaa}
Let $n\geq 1$ and $M$ be a $\mathcal{F}^{n}-$minimal set, then \\$\mathcal{F}_{\sigma_{n}}(\mathcal{F}^{i}(M))=\mathcal{F}^{i}(M)$, for every positive integer $i$ and every $\sigma\in\Lambda^{\mathbb{Z}_{+}}$. Also,  there is $t\leq n$ such that $\mathcal{F}^{i}(M)$ is an $\mathcal{F}^{n}-$minimal set, for all $i\leq t-1$.
\end{lemma}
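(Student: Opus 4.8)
My plan is to work throughout with the induced set map $A\mapsto\mathcal{F}(A)=\bigcup_{\lambda}f_{\lambda}(A)$ on closed subsets of $X$, using the identity $\mathcal{F}^{p+q}(A)=\mathcal{F}^{p}(\mathcal{F}^{q}(A))$ and the fact that every composition of exactly $n$ of the maps $f_{\lambda}$ equals $\mathcal{F}_{\tau_{n}}$ for a suitable $\tau$, hence maps $M$ onto $M$. First I would record the structural consequences. Taking the union over all generators of $\mathcal{F}^{n}$ of the equalities $\mathcal{F}_{\tau_{n}}(M)=M$ gives $\mathcal{F}^{n}(M)=M$, so the sequence $(\mathcal{F}^{i}(M))_{i\geq 0}$ is purely periodic; let $t$ be its least period, so $1\leq t$, $t\mid n$ (in particular $t\leq n$), and set $M_{j}=\mathcal{F}^{j}(M)$ for $0\leq j\leq t-1$, so that $\mathcal{F}(M_{j})=M_{j+1\bmod t}$. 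I would also prove the \emph{peeling} fact: for $1\leq m\leq n$, every composition $g$ of $m$ of the $f_{\lambda}$ satisfies $g(\mathcal{F}^{n-m}(M))=M$, since $g\circ h$ is an $n$-fold composition for each $(n-m)$-fold composition $h$, so $g(h(M))=M$, and one takes the union over $h$. Combined with $t\mid n$ this gives: if a $t$-fold composition $\rho$ satisfies $\rho(M)\subseteq M$, then $\rho(M)=M$ (apply $\rho$ $n/t$ times and compare with an $n$-fold composition, which fixes $M$).

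For the first assertion, fix $\sigma$; by periodicity it suffices to treat $1\leq i\leq t-1$ (if $t\mid i$ then $\mathcal{F}^{i}(M)=M$ and there is nothing to prove). Write $\mathcal{F}_{\sigma_{n}}=g\circ h$, where $h$ is the composition of its first $n-i$ factors and $g$ of its last $i$ factors. Peeling (with $m=n-i$) gives $h(\mathcal{F}^{i}(M))=M$, hence $\mathcal{F}_{\sigma_{n}}(\mathcal{F}^{i}(M))=g(M)$, and this is contained in $\mathcal{F}^{i}(M)$ because $g$ is one generator of $\mathcal{F}^{i}$. Thus the entire lemma reduces to the single claim that \emph{every $i$-fold composition $g$ of the generators maps $M$ onto $\mathcal{F}^{i}(M)$}, for $1\leq i\leq t-1$. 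Granting this, $\mathcal{F}_{\sigma_{n}}(\mathcal{F}^{i}(M))=g(M)=\mathcal{F}^{i}(M)$; and for the second assertion, $\mathcal{F}^{i}(M)$ is nonempty and closed, the first assertion provides the equality $\mathcal{F}_{\sigma_{n}}(\mathcal{F}^{i}(M))=\mathcal{F}^{i}(M)$ required of a $\mathcal{F}^{n}$-minimal set, and if $A$ were a nonempty proper subset of $\mathcal{F}^{i}(M)$ with $\mathcal{F}_{\tau_{n}}(A)=A$, then writing $\mathcal{F}_{\tau_{n}}=g'\circ h'$ in the same way and putting $L=h'(A)\subseteq h'(\mathcal{F}^{i}(M))=M$ we get $(h'\circ g')(L)=L$ for the $n$-fold composition $h'\circ g'$; the case $L=M$ forces $g'(M)=g'(L)=A$, a proper subset of $\mathcal{F}^{i}(M)$, contradicting the above claim, while the case $L\neq M$ contradicts the minimality of $M$.

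Hence the crux is that claim, and this is exactly where $\mathcal{F}^{n}$-minimality of $M$ enters. Suppose some $i$-fold composition $g_{0}$ had $g_{0}(M)=K$ properly contained in $\mathcal{F}^{i}(M)$. I would transport $K$ around the cycle $M_{0},\ldots,M_{t-1}$ back to $M$ by choosing (by peeling and periodicity) a $(t-i)$-fold composition $h_{1}$ with $h_{1}(\mathcal{F}^{i}(M))=M$; then $h_{1}\circ g_{0}$ is a $t$-fold composition with $(h_{1}\circ g_{0})(M)\subseteq M$, hence onto $M$ by the fact above, so $h_{1}(K)=M$ — that is, $h_{1}$ collapses the proper subset $K$ onto all of $M$. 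Iterating this transport around the cycle should force the appearance of a nonempty proper closed subset of $M$ that is invariant under a genuine $n$-fold composition of the $f_{\lambda}$, contradicting $\mathcal{F}^{n}$-minimality of $M$.

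I expect this transport step to be the main obstacle. The generators $f_{\lambda}$ need not be injective, so pushing a proper subset forward, or pulling one back, along the cycle does not automatically keep it proper; one should first replace $K$ by $K'=\bigcap_{k\geq 0}\mathcal{F}_{\sigma_{n}}^{k}(\mathcal{F}^{i}(M))$, a closed set with $\mathcal{F}_{\sigma_{n}}(K')=K'$ that is still properly contained in $\mathcal{F}^{i}(M)$, and then verify that its image under the collapsing composition is a genuinely proper invariant subset of $M$. By contrast, the structural facts, the forward inclusion, and the reduction above are all routine.
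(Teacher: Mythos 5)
Your structural groundwork (periodicity of $i\mapsto\mathcal{F}^{i}(M)$, the peeling identity, and the computation $\mathcal{F}_{\sigma_{n}}(\mathcal{F}^{i}(M))=g(M)\subseteq\mathcal{F}^{i}(M)$ with $g$ the last $i$ factors of $\mathcal{F}_{\sigma_{n}}$) is correct and is essentially the paper's own displayed calculation. You are also right that this only yields an inclusion, and that everything hinges on the further claim that \emph{every} $i$-fold composition of generators maps $M$ onto all of $\mathcal{F}^{i}(M)$; the paper's proof simply writes ``This implies that $\mathcal{F}_{\sigma_{n}}(\mathcal{F}^{i}(M))=\mathcal{F}^{i}(M)$'' at exactly this point, with no justification. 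The genuine gap in your proposal is that you leave this claim as a sketch (``iterating this transport around the cycle should force\ldots''), and the gap cannot be closed: the claim is false, and with it both assertions of the lemma as stated.

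Concretely, let $X=\{p,q,u,v,w,z\}$ be six points with the discrete metric, $\Lambda=\{a,b\}$, and define $f_{a}:p\mapsto u,\;q\mapsto v,\;u\mapsto q,\;v\mapsto p,\;w\mapsto q,\;z\mapsto p$ and $f_{b}:p\mapsto w,\;q\mapsto z,\;u\mapsto q,\;v\mapsto p,\;w\mapsto q,\;z\mapsto p$. Every $2$-fold composition of generators restricts to the transposition of $p$ and $q$ on $M=\{p,q\}$, so $M$ is $\mathcal{F}^{2}$-minimal (and every orbit of $\mathcal{F}$ is dense in $X$, so this is not ruled out by the ambient minimality hypotheses used later in the paper). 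Yet $f_{a}(M)=\{u,v\}$ while $\mathcal{F}(M)=\{u,v,w,z\}$, so your key claim already fails at $i=1$; correspondingly $(f_{a}\circ f_{a})(\mathcal{F}(M))=\{u,v\}\neq\mathcal{F}(M)$, and $\{u,v\}$ is a nonempty proper subset of $\mathcal{F}(M)$ fixed by $f_{a}\circ f_{a}$, so $\mathcal{F}(M)$ is not $\mathcal{F}^{2}$-minimal either. Note the obstruction is not the non-injectivity you flag but the fact that distinct generators can send $M$ to distinct sets, which no transport around the cycle can repair. So your reduction is sound and actually exposes the flaw in the paper's argument, but as a proof it is incomplete at precisely the step you identified, and the statement needs an additional hypothesis (e.g.\ that $f_{\lambda}(M)$ is independent of $\lambda$) before that step can be supplied.
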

\begin{proof}
By the definition of $\mathcal{F}^{n}-$minimal set, it is enough to prove the case of $1\leq i\leq n-1$. Fix $1\leq i\leq n-1$. Suppose that $\{\lambda_{1},...,\lambda_{i}\}$ and $\{\lambda_{i+1},\ldots,\lambda_{n+i}\}$ are arbitrary  sequences in $\Lambda$ of length $i$ and $n$, respectively. Since $M$ is a $\mathcal{F}^{n}-$minimal set, we have
\begin{eqnarray*}
f_{\lambda_{n+i}}\circ \cdots\circ f_{\lambda_{i+1}}(f_{\lambda_{i}}\circ \cdots\circ f_{\lambda_{1}}(M))&=& f_{\lambda_{n+i}}\circ \cdots \circ f_{\lambda_{n+1}}(\mathcal{F}_{\sigma_{n}}(M))\\&=& f_{\lambda_{n+i}}\circ \cdots\circ f_{\lambda_{n+1}}(M)\\&\subseteq& \mathcal{F}^{i}(M).
\end{eqnarray*}
This implies that, $\mathcal{F}_{\sigma_{n}}(\mathcal{F}^{i}(M))=\mathcal{F}^{i}(M)$, for all $\sigma\in \Lambda^{\mathbb{Z}_{+}}$.\\
Consider $t\leq n$ as the least positive integer with $\mathcal{F}^{t}(M)\cap M\neq \emptyset$. By contradiction, suppose that $\mathcal{F}_{\sigma_{n}}(A)= A$, for a nonempty set $A\subset \mathcal{F}^{i}(M)$ and a sequence $\sigma\in \Lambda^{\mathbb{Z}_{+}}$. Consider $x\in M$ and the sequence $\{\lambda_{1},\cdots,\lambda_{i}\}$ such that $f_{\lambda_{i}}~\circ \cdots\circ~f_{\lambda_{1}}(x)\in A$. Then  $\mathcal{F}_{\sigma_{n}}(\mathcal{F}^{i}(M))=\mathcal{F}^{i}(M)$ implies that\\ $f_{\lambda_{n+i}}\circ  \cdots \circ f_{\lambda_{i+1}}(\mathcal{F}_{\sigma_{i}}(x))=f_{\lambda_{n+i}}\circ \cdots \circ f_{\lambda_{n+1}}(\mathcal{F}_{\sigma_{n}}(x))\in A$.  \\Since $\mathcal{F}_{\sigma_{n}}(M)=M$, we have $f_{\lambda_{n}}\circ \cdots \circ f_{\lambda_{1}}(x)\in M$  consequently, $\mathcal{F}^{i}(M)\cap M\neq \emptyset$. This is a clear contradiction of the fact that $i\leq t-1$.
\end{proof}
\begin{lemma}\label{lea}
Let $X$ be a compact Hausdorff space and $\mathcal{F}=\{X; f_{\lambda}|\lambda\in\Lambda\} $ be a minimal IFS with continuous functions $f_{\lambda}:X\rightarrow X$. Let $n$ be a positive integer. Then for some $\mathcal{F}^{n}-$minimal set $M$ and some $t\leq n$, we have the following properties:\\
$(1)~X$ is disjoint union of $M, \mathcal{F}(M),\cdots, \mathcal{F}^{t-1}(M).$\\
$(2)$ Each of the sets  $M, \mathcal{F}(M),\cdots, \mathcal{F}^{t-1}(M).$ is clopen.\\
$(3)$ The family $~\{M, \mathcal{F}(M),\cdots, \mathcal{F}^{t-1}(M)\}$ is the collection of all subsets of $X$ that are $\mathcal{F}^{t}-$minimal and also the collection of all subsets of $X$ that are\\ $\mathcal{F}^{n}$-minimal.
\end{lemma}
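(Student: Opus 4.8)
The plan is to derive everything from Lemma \ref{leaa} together with the orbit reformulation of minimality. First I would fix an $\mathcal{F}^{n}$-minimal set $M$ (its existence follows from a standard Zorn's lemma argument applied to the family of nonempty closed sets $K\subseteq X$ with $\mathcal{F}_{\sigma_{n}}(K)=K$ for all $\sigma$, which contains $X$ and is stable under intersections of chains by compactness). I will repeatedly use that ``$M$ is $\mathcal{F}^{n}$-minimal'' implies that the $\mathcal{F}^{n}$-orbit $\{\mathcal{F}_{\sigma_{nk}}(x):k\ge 1,\ \sigma\in\Lambda^{\mathbb{Z}_{+}}\}$ is dense in $M$ for every $x\in M$; this is the ``Equivalently'' clause of the definition of a minimal IFS, applied to the IFS $\mathcal{F}^{n}$ restricted to $M$. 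Let $t\le n$ be the least positive integer with $\mathcal{F}^{t}(M)\cap M\neq\emptyset$; by Lemma \ref{leaa} we have $\mathcal{F}_{\sigma_{n}}(\mathcal{F}^{i}(M))=\mathcal{F}^{i}(M)$ for all $i$ and $\sigma$, and each $\mathcal{F}^{i}(M)$ with $0\le i\le t-1$ is $\mathcal{F}^{n}$-minimal.

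The heart of the proof, and the step I expect to be the main obstacle, is to show that $\mathcal{F}^{t}(M)=M$: unlike for a single minimal map, $\mathcal{F}^{t}$ does not commute with $\mathcal{F}^{n}$ in a usable way, so the usual one-line argument is unavailable. Here is how I would proceed. The set $C:=M\cap\mathcal{F}^{t}(M)$ is nonempty and satisfies $\mathcal{F}^{n}(C)\subseteq\mathcal{F}^{n}(M)\cap\mathcal{F}^{n}(\mathcal{F}^{t}(M))=M\cap\mathcal{F}^{t}(M)=C$ by Lemma \ref{leaa} and $\mathcal{F}^{n}(M)=M$, hence $\mathcal{F}^{nk}(C)\subseteq C$ for all $k$. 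Choosing $x\in C$, its $\mathcal{F}^{n}$-orbit then lies in $C$ and is dense in $M$, so $M\subseteq\overline{C}=C$, i.e. $M\subseteq\mathcal{F}^{t}(M)$. Iterating the set map $\mathcal{F}^{t}$ gives an increasing chain $M\subseteq\mathcal{F}^{t}(M)\subseteq\mathcal{F}^{2t}(M)\subseteq\cdots$. Writing $n=qt+r$ with $0\le r<t$, we get $M=\mathcal{F}^{n}(M)=\mathcal{F}^{r}(\mathcal{F}^{qt}(M))\supseteq\mathcal{F}^{r}(M)$; if $r\ge 1$ this gives $\mathcal{F}^{r}(M)\cap M\neq\emptyset$ with $r<t$, contradicting the choice of $t$, so $t\mid n$, say $n=qt$, and the chain collapses: $M\subseteq\mathcal{F}^{t}(M)\subseteq\cdots\subseteq\mathcal{F}^{qt}(M)=\mathcal{F}^{n}(M)=M$, whence $\mathcal{F}^{t}(M)=M$. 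Disjointness of $M,\mathcal{F}(M),\ldots,\mathcal{F}^{t-1}(M)$ now follows: if $\mathcal{F}^{i}(M)\cap\mathcal{F}^{j}(M)\neq\emptyset$ with $0\le i<j\le t-1$, applying $\mathcal{F}^{t-j}$ and using $\mathcal{F}^{t}(M)=M$ gives $\mathcal{F}^{i+t-j}(M)\cap M\neq\emptyset$ with $1\le i+t-j<t$, contradicting the choice of $t$ again.

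For $(1)$ and $(2)$, set $Y:=\bigcup_{i=0}^{t-1}\mathcal{F}^{i}(M)$, which is closed (a finite union of continuous images of the compact set $M$) and nonempty. Since $\mathcal{F}(Y)=\bigcup_{i=1}^{t}\mathcal{F}^{i}(M)=\bigl(\bigcup_{i=1}^{t-1}\mathcal{F}^{i}(M)\bigr)\cup\mathcal{F}^{t}(M)=Y$ by $\mathcal{F}^{t}(M)=M$, the orbit of any point of $M$ stays in $Y$; as $\mathcal{F}$ is minimal that orbit is dense in $X$ and $Y$ is closed, so $Y=X$. With the disjointness just proved, $X=M\sqcup\mathcal{F}(M)\sqcup\cdots\sqcup\mathcal{F}^{t-1}(M)$, giving $(1)$. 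For $(2)$, the complement of $\mathcal{F}^{i}(M)$ in $X$ equals $\bigcup_{j\neq i}\mathcal{F}^{j}(M)$, a finite union of closed sets, so each $\mathcal{F}^{i}(M)$ is clopen.

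Finally, for $(3)$ write $n=qt$. Each $\mathcal{F}^{i}(M)$, $0\le i\le t-1$, is $\mathcal{F}^{t}$-minimal: for any word $\mu$ of length $t$ the composition $g_{\mu}$ satisfies $g_{\mu}(\mathcal{F}^{i}(M))\subseteq\mathcal{F}^{t}(\mathcal{F}^{i}(M))=\mathcal{F}^{i}(\mathcal{F}^{t}(M))=\mathcal{F}^{i}(M)$, while $g_{\mu}^{\,q}$ is a composition of $n$ maps of $\mathcal{F}$, so $\mathcal{F}^{n}$-minimality of $\mathcal{F}^{i}(M)$ gives $g_{\mu}^{\,q}(\mathcal{F}^{i}(M))=\mathcal{F}^{i}(M)$; sandwiching forces $g_{\mu}(\mathcal{F}^{i}(M))=\mathcal{F}^{i}(M)$, hence $\mathcal{F}_{\sigma_{t}}(\mathcal{F}^{i}(M))=\mathcal{F}^{i}(M)$ for all $\sigma$, and any proper nonempty $A\subset\mathcal{F}^{i}(M)$ with $g_{\mu}(A)=A$ would satisfy $g_{\mu}^{\,q}(A)=A$, i.e. $\mathcal{F}_{\rho_{n}}(A)=A$ for a suitable $\rho$, contradicting $\mathcal{F}^{n}$-minimality of $\mathcal{F}^{i}(M)$. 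Conversely, any $\mathcal{F}^{n}$-minimal (resp. $\mathcal{F}^{t}$-minimal) subset $M'\subseteq X$ meets some $\mathcal{F}^{i}(M)$ since $X=\bigsqcup_{i}\mathcal{F}^{i}(M)$; the common part is invariant in the appropriate sense, and the orbit-density argument used above for $\mathcal{F}^{t}(M)=M$ then yields $M'=\mathcal{F}^{i}(M)$ (in the $\mathcal{F}^{t}$-case one first checks, as above, that the $\mathcal{F}^{t}$-orbit of a common point never leaves the single piece $\mathcal{F}^{i}(M)$, so $M'\subseteq\mathcal{F}^{i}(M)$, and then minimality gives equality). Thus $\{M,\mathcal{F}(M),\ldots,\mathcal{F}^{t-1}(M)\}$ is at once the collection of all $\mathcal{F}^{t}$-minimal and of all $\mathcal{F}^{n}$-minimal subsets of $X$.
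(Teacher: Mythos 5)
Your proof is correct and follows the same skeleton as the paper's: the same choice of $t$ as the least positive integer with $\mathcal{F}^{t}(M)\cap M\neq\emptyset$, the same decomposition $X=M\cup\mathcal{F}(M)\cup\cdots\cup\mathcal{F}^{t-1}(M)$, and the same appeal to minimality of $\mathcal{F}$ to conclude that this closed, forward-invariant union is all of $X$. Where you genuinely diverge is in how the key identity $\mathcal{F}^{t}(M)=M$ is obtained: the paper disposes of it with ``minimal sets are disjoint or equal,'' which presupposes that $\mathcal{F}^{t}(M)$ is itself $\mathcal{F}^{n}$-minimal (Lemma \ref{leaa} only asserts this for $i\leq t-1$), whereas your argument via the invariant intersection $C=M\cap\mathcal{F}^{t}(M)$ and orbit density is self-contained and, as a by-product, yields the divisibility $t\mid n$. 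That divisibility is precisely what is needed to pass between $\mathcal{F}^{t}$-minimality and $\mathcal{F}^{n}$-minimality in statement $(3)$, which the paper dismisses as ``trivial'' but which your sandwich argument $g_{\mu}^{\,q}(\mathcal{F}^{i}(M))=\mathcal{F}^{i}(M)\Rightarrow g_{\mu}(\mathcal{F}^{i}(M))=\mathcal{F}^{i}(M)$ actually proves. The one small caveat is your existence remark: Zorn's lemma produces a set minimal among \emph{closed} invariant sets, while the paper's definition of minimal set quantifies over all nonempty subsets $A\subset M$; since the paper itself takes existence of an $\mathcal{F}^{n}$-minimal set for granted, this does not affect the substance of your proof.
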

\begin{proof}
Let $M$ be a $\mathcal{F}^{n}-$minimal set. By Lemma \ref{leaa}, for every positive integer $i$, $\mathcal{F}^{i}(M)$ is a $\mathcal{F}^{n}-$minimal set. Consider $t\leq n$ as the least positive integer with $\mathcal{F}^{t}(M)\cap M\neq \emptyset$. Since minimal sets are disjoint or equal, we have $\mathcal{F}^{t}(M)=M$. By the choice  of $t$, the sets $M, \mathcal{F}(M),\cdots, \mathcal{F}^{t-1}(M)$ are pairwise disjoint. Let $U=M\cup \mathcal{F}(M)\cup\cdots\cup\mathcal{F}^{t-1}(M)$, Our choice  of $t$ and $M$ shows that, $f_{\lambda}(U)\subset U$ for all $\lambda\in \Lambda$. So $U=X$ and the statement $(1)$ holds  .\\
Statements $(2)$ and $(3)$ hold trivially whenever  $X$ is the disjoint union of the sets $M, \mathcal{F}(M),\cdots, \mathcal{F}^{t-1}(M).$
\end{proof}
\begin{corollary}
Let $\mathcal{F}$ be a minimal IFS and $n>1$ be an integer. Then the following are equivalent:\\
$(1)$ There is a continuous map $g:X\rightarrow \mathbb{Z}_{n}$ such that\\ $(g\circ f_{\lambda})(x)=g(x)+1~~(mod~n)$, for all $\lambda\in \Lambda$.\\
$(2)$ There is a proper subset $M$ in $X$ which is  minimal  for $\mathcal{F}^{n}$  and is not minimal for $\mathcal{F}^{t}$, for all $t<n$
\end{corollary}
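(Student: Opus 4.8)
The plan is to reduce both conditions to one combinatorial invariant. Apply Lemma \ref{lea} with the given $n$: there exist an $\mathcal{F}^{n}$-minimal set $M$ and a least positive integer $t\le n$ with $\mathcal{F}^{t}(M)=M$, such that $X=M\sqcup\mathcal{F}(M)\sqcup\cdots\sqcup\mathcal{F}^{t-1}(M)$ with each floor clopen, and such that $\{M,\mathcal{F}(M),\ldots,\mathcal{F}^{t-1}(M)\}$ is at once the collection of all $\mathcal{F}^{t}$-minimal subsets and of all $\mathcal{F}^{n}$-minimal subsets of $X$. Two elementary consequences of the minimality of $t$ will be used repeatedly: (a) $t\mid n$ --- write $n=qt+r$ with $0\le r<t$; since $\mathcal{F}^{kt}(M)=M$ for every $k$ we get $M=\mathcal{F}^{n}(M)=\mathcal{F}^{r}(M)$, forcing $r=0$; and (b) $\mathcal{F}^{s}(M)\ne M$ for $0<s<t$, by the same division argument. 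It then suffices to show the chain $(1)\Leftrightarrow(t=n)\Leftrightarrow(2)$.

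The equivalence $(2)\Leftrightarrow(t=n)$ is quick. Every $\mathcal{F}^{n}$-minimal set is some $\mathcal{F}^{i}(M)$ and is therefore also $\mathcal{F}^{t}$-minimal; hence if $t<n$ no $\mathcal{F}^{n}$-minimal set can fail to be $\mathcal{F}^{t}$-minimal, so $(2)$ is false. Conversely, if $t=n$ then $M$ is a proper subset of $X$ (the tower has $n\ge 2$ nonempty pairwise disjoint floors), it is $\mathcal{F}^{n}$-minimal, and by (b) it is not even $\mathcal{F}^{s}$-invariant, hence not $\mathcal{F}^{s}$-minimal, for any $0<s<n$; so $(2)$ holds. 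For $(t=n)\Rightarrow(1)$, set $g(x)=i$ when $x\in\mathcal{F}^{i}(M)$; this is well defined and locally constant, hence continuous, by Lemma \ref{lea}, and from $f_{\lambda}(\mathcal{F}^{i}(M))\subseteq\mathcal{F}(\mathcal{F}^{i}(M))=\mathcal{F}^{i+1}(M)$ (indices modulo $n$, using $\mathcal{F}^{n}(M)=M$) one reads off $(g\circ f_{\lambda})(x)=g(x)+1\pmod{n}$.

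The substantive implication is $(1)\Rightarrow(t=n)$. Given $g$ as in $(1)$, first note that $g$ is onto $\mathbb{Z}_{n}$, since $g(X)$ is nonempty and closed under $v\mapsto v+1$ and $1$ generates $\mathbb{Z}_{n}$. Now work on the single floor $M$. Because $M$ is $\mathcal{F}^{t}$-minimal, $\mathcal{F}_{\sigma_{t}}(M)=M$ for every $\sigma$, while iterating the relation in $(1)$ $t$ times gives $g(\mathcal{F}_{\sigma_{t}}(x))=g(x)+t$ in $\mathbb{Z}_{n}$; hence $g(M)$ is invariant under $v\mapsto v+t$, and for each $v\in g(M)$ the clopen set $P_{v}=\{x\in M:g(x)=v\}$ satisfies $\mathcal{F}_{\sigma_{t}}(P_{v})\subseteq P_{v+t}$. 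Since the $P_{v+t}$ form a partition of $M=\mathcal{F}_{\sigma_{t}}(M)=\bigcup_{v}\mathcal{F}_{\sigma_{t}}(P_{v})$ and $v\mapsto v+t$ permutes $g(M)$, these inclusions are forced to be equalities; iterating and cutting a length-$n$ word into $n/t$ blocks of length $t$ (legitimate by (a)) then yields $\mathcal{F}_{\sigma_{n}}(P_{v})=P_{v}$ for all $\sigma$. But $M$ is $\mathcal{F}^{n}$-minimal, so a nonempty closed $P_{v}\subseteq M$ with $\mathcal{F}_{\sigma_{n}}(P_{v})=P_{v}$ cannot be proper; hence $P_{v}=M$, and as the $P_{v}$ are nonempty and disjoint there is only one of them. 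Thus $g$ is constant on $M$, and consequently constant on each $\mathcal{F}^{i}(M)$ with the consecutive values $c_{0}+i$, so $g(X)$ has at most $t$ elements. Surjectivity forces $n\le t\le n$, i.e.\ $t=n$, completing the equivalence. The point demanding care is the whole argument on $M$: using its $\mathcal{F}^{t}$-minimality to pin down the level sets of $g$, using its $\mathcal{F}^{n}$-minimality to collapse them, and keeping the two moduli $t$ and $n$ straight; the remaining steps are routine.
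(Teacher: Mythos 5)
Your argument is correct, but the direction $(1)\Rightarrow(2)$ is handled by a genuinely different route than the paper's. The paper argues directly: it sets $M_{i}=g^{-1}(i)$, shows the $M_{i}$ are cyclically permuted by $\mathcal{F}$, and declares $M_{0}$ to be the desired $\mathcal{F}^{n}$-minimal set. To get the inclusion $M_{i+1}\subseteq f_{\lambda}(M_{i})$ it invokes surjectivity of each individual $f_{\lambda}$, which is never established (minimality of the IFS only yields $\bigcup_{\lambda}f_{\lambda}(X)=X$), and it does not separately verify that $M_{0}$ admits no proper invariant subset. You instead funnel both conditions through the tower height $t$ of Lemma \ref{lea} and prove $(1)\Rightarrow(t=n)$ by showing $g$ is constant on each floor: the level sets $P_{v}$ of $g$ restricted to $M$ are permuted by every $\mathcal{F}_{\sigma_{t}}$, hence fixed by every $\mathcal{F}_{\sigma_{n}}$ (using $t\mid n$), hence equal to $M$ by $\mathcal{F}^{n}$-minimality; so $g$ takes at most $t$ values and surjectivity of $g$ forces $t=n$. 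This buys you two things: you never need the individual $f_{\lambda}$ to be surjective, only the equality $\mathcal{F}_{\sigma_{t}}(M)=M$ that is built into the definition of a minimal set, and the minimality of the witness set in $(2)$ comes for free from Lemma \ref{lea} rather than requiring a separate check. The cost is a heavier reliance on Lemma \ref{lea}(3) (that the $\mathcal{F}^{n}$-minimal sets are exactly the floors of a single tower), where the paper's construction of $M_{0}$ is, at least in intent, self-contained. Your remaining steps, $(t=n)\Rightarrow(1)$ and $(2)\Leftrightarrow(t=n)$, coincide essentially with the paper's $(2)\Rightarrow(1)$.
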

\begin{proof}
$(1)\Rightarrow (2)$ Consider the map $g:X\rightarrow \mathbb{Z}_{n}$ such that \\$(gof_{\lambda})(x)=g(x)+1~(mod~n)$, for all $\lambda\in \Lambda$ and put $M_{i}=\{x\in X: g(x)=i\}$ for all $0\leq i\leq n-1$. So $M_{0},M_{1},...,M_{n-1}$ are pairwise disjoint nonempty sets. \\Take arbitrary elements $1\leq i\leq n-1$ and $\lambda\in \Lambda$. Let $x\in M_{i}$. Since  \\$(g\circ f_{\lambda})(x)=g(x)+1~~(mod~n)$ we have $f_{\lambda}(x)\in M_{i+1}$. Then  $\mathcal{F}(M_{i})\subset M_{i+1}$. On the other hand, let $y$ be an arbitrary point in $M_{i+1}$. Since $f_{\lambda}$ is a surjective map, there exists $x\in X$ such that $y=f_{\lambda}(x)$. So, $g(y)= g(f_{\lambda}(x))=g(x)+1=i+1$. Hence $x\in M_{i}$. This implies that $M_{i+1}\subseteq f_{\lambda}( M_{i})$. So, $\mathcal{F}(M_{n-1})=M_{0}$ and $\mathcal{F}(M_{i})=M_{i+1}$, for all $0\leq i<n-1$. This implies that $M_{0}$ is a $\mathcal{F}^{n}$-minimal set and is not minimal for $\mathcal{F}^{t}$, where $t<n$.\\
 $(2)\Rightarrow (1)$ Let $M$ be a minimal set for  $\mathcal{F}^{n}$ such that $M$ is not minimal for $\mathcal{F}^{t}$, for all $t<n$. So by statement $(1)$ of  Lemma \ref{lea},  $X$ is a disjoint union of $M, \mathcal{F}(M),..., \mathcal{F}^{n-1}(M).$ Now we define $g:X\rightarrow \mathbb{Z}_{n}$ by $g(x)=i$ if $x\in \mathcal{F}^{i}(M)$, where $\mathcal{F}^{0}(M)=M$. Because of the statement $2$ in Lemma \ref{lea}, $g$ is continuous.
\end{proof}
\begin{definition}
Let $\mathcal{A}=\{X_{1},...,X_{n}\}$ be a finite partition of $X$. We say that the sets $X_{1},...,X_{n}$ are cyclically permuted by $\mathcal{F}$, if $f_{\lambda}(X_{n})=X_{1}$ and \\$f_{\lambda}(X_{i})=X_{i+1}$, for all $1\leq i\leq n-1$ and  all $\lambda\in\Lambda$.
\end{definition}
 It is clear that if the sets $X_{1},...,X_{n}$ are cyclically permuted by $\mathcal{F}$, then $\mathcal{F}_{\sigma_{j}}(X_{i})=X_{i+j~(mod~n)}$ for all $\sigma\in \Lambda^{\mathbb{Z}_{+}}$ and all $i\geq 0$.\\
Next theorem is one of the main results of this paper. This theorem
is an extension of Theorem 2.3 of \cite{[BK]} to iterated function systems and is proved the same way.
\begin{theorem}\label{tha}
 Let $\alpha=(j_{1},j_{2},...)$ be a sequence of integers greater than $1$. Let $m_{i}=j_{1}.j_{2}....j_{i}$ for each $i$. Then, for every $\sigma\in \Lambda^{\mathbb{Z}_{+}}$, $\mathcal{F}_{\sigma}$ is topologically conjugate to $g_{\alpha}$ if and only if the following properties hold:\\
$(1)$ For each positive integer $i$, there is a cover $P_{i}$ of $X$ consisting of $m_{i}$ pairwise disjoint, nonempty, clopen sets which are cyclically permuted by $\mathcal{F}$.\\
$(2)$ For each positive integer $i$, $P_{i+1}$ partitions $P_{i}$.\\
$(3)$ If $V_{1}\supset V_{2}\supset V_{3}\supset ...$ is a nested sequence with $V_{i}\in P_{i}$ for each $i$, then $\bigcap_{i=1}^{\infty}V_{i}$ consists of a single point.
\end{theorem}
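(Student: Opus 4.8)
The plan is to mirror the proof of Theorem~2.3 of \cite{[BK]}, replacing the iterates $f^{n}$ of a single map by the compositions $\mathcal{F}_{\sigma_{n}}$ and exploiting that, by the definition of ``cyclically permuted by $\mathcal{F}$'', the cyclic shift induced on any such partition is the \emph{same} for every $f_{\lambda}$, so that $\mathcal{F}_{\sigma_{j}}$ shifts it by $j$ independently of $\sigma$.

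\textbf{From (1)--(3) to the conjugacy.} First I would enumerate the partitions coherently. Write $P_{1}=\{X_{0}^{1},\dots,X_{m_{1}-1}^{1}\}$ with $f_{\lambda}(X_{k}^{1})=X_{(k+1)\bmod m_{1}}^{1}$; having enumerated $P_{i}$, use (2) to pick a member $X_{0}^{i+1}\in P_{i+1}$ with $X_{0}^{i+1}\subseteq X_{0}^{i}$ and let the cyclic action generate the rest, $X_{\ell}^{i+1}:=\mathcal{F}_{\sigma_{\ell}}(X_{0}^{i+1})$. Then compatibility $X_{\ell}^{i+1}\subseteq X_{\ell\bmod m_{i}}^{i}$ is automatic, because applying $\mathcal{F}_{\sigma_{\ell}}$ to $X_{0}^{i+1}\subseteq X_{0}^{i}$ yields $X_{\ell\bmod m_{i+1}}^{i+1}\subseteq X_{\ell\bmod m_{i}}^{i}$. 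Next define $h\colon X\to\Delta_{\alpha}$ by letting $h(x)=(r_{1},r_{2},\dots)$ be the unique sequence for which $r_{1}+r_{2}m_{1}+\dots+r_{i}m_{i-1}$ equals, for every $i$, the index of the member of $P_{i}$ containing $x$ (well defined since those indices are coherent $\bmod\, m_{i}$). Clopenness of the $P_{i}$ makes $h$ continuous; condition (3) gives injectivity; surjectivity holds because any point of $\Delta_{\alpha}$ prescribes a nested sequence $V_{i}\in P_{i}$ whose intersection, by (3), is its $h$-preimage; and compactness of $X$ promotes the continuous bijection $h$ to a homeomorphism. Finally $f_{\lambda}$ advances the containing-set index by $1\bmod m_{i}$ at every level, which is exactly the effect of $g_{\alpha}$ on coordinates, so $h\circ f_{\lambda}=g_{\alpha}\circ h$ for every $\lambda$, whence $h\circ\mathcal{F}_{\sigma_{n}}=g_{\alpha}^{n}\circ h$ for all $\sigma$ and $n$.

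\textbf{From the conjugacy to (1)--(3).} Suppose $\mathcal{F}_{\sigma}$ is conjugate to $g_{\alpha}$ for every $\sigma$. Fix $\lambda_{*}\in\Lambda$; the sequence $\sigma=(\lambda_{*},\lambda_{*},\dots)$ produces a homeomorphism $h$ with $g_{\alpha}\circ h=h\circ f_{\lambda_{*}}$, so $f_{\lambda_{*}}=h^{-1}g_{\alpha}h$ is a homeomorphism. Set $P_{i}:=\{h^{-1}(C):C\in Q_{i}\}$, where $Q_{i}$ is the partition of $\Delta_{\alpha}$ into its $m_{i}$ depth-$i$ cylinders. Then $P_{i}$ is a clopen cover of cardinality $m_{i}$, $P_{i+1}$ refines $P_{i}$ (since $Q_{i+1}$ refines $Q_{i}$), $f_{\lambda_{*}}$ cyclically permutes $P_{i}$ (since $g_{\alpha}$ cyclically permutes $Q_{i}$), and (3) holds because a decreasing chain of cylinders of increasing depth has singleton intersection. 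The step requiring care is that $P_{i}$ must be cyclically permuted by \emph{every} $f_{\mu}$. For this I would use $\sigma=(\mu,\lambda_{*},\lambda_{*},\dots)$: its conjugacy yields $\tilde h$ with $g_{\alpha}\circ\tilde h=\tilde h\circ f_{\mu}$ (so $f_{\mu}$ is a homeomorphism) and $g_{\alpha}^{2}\circ\tilde h=\tilde h\circ f_{\lambda_{*}}\circ f_{\mu}$; but the first relation also gives $g_{\alpha}^{2}\circ\tilde h=\tilde h\circ f_{\mu}\circ f_{\mu}$, and cancelling the homeomorphism $f_{\mu}$ forces $f_{\mu}=f_{\lambda_{*}}$. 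Hence all $f_{\lambda}$ coincide with $h^{-1}g_{\alpha}h$, the IFS reduces to a single adding-machine-conjugate map, and (1)--(3) follow exactly as in \cite{[BK]}.

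The point I expect to be the genuine obstacle is this last reconciliation in the converse: ruling out that distinct choices of $\sigma$ give inequivalent conjugacies and, with them, inequivalent actions of the various $f_{\mu}$ on the partitions. All the rest is a faithful transcription of the Block--Keesling argument, with $\mathcal{F}_{\sigma_{n}}$ in the role of $f^{n}$ and the symbol-independence of the cyclic permutations carrying the bookkeeping.
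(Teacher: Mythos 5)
Your proof is correct, and while your treatment of the direction $(1)$--$(3)\Rightarrow$ conjugacy is essentially the paper's (and Block--Keesling's) argument --- coherent enumeration of the partitions, the mixed-radix coding map $h$, injectivity from $(3)$, compactness to upgrade to a homeomorphism --- your converse takes a genuinely different route. The paper fixes a $\sigma$ with conjugating homeomorphism $h$, pulls back the Block--Keesling cylinder partitions to $P_{i}=h^{-1}(Q_{i})$, verifies $\mathcal{F}_{\sigma_{j}}(X_{i,0})=X_{i,j}$ for that $\sigma$, and refers the remaining verifications back to \cite{[BK]}. You instead first prove that the hypothesis ``conjugate for \emph{every} $\sigma$'' collapses the IFS: comparing the $n=1$ and $n=2$ relations for $\sigma=(\mu,\lambda_{*},\lambda_{*},\dots)$ gives $f_{\lambda_{*}}\circ f_{\mu}=f_{\mu}\circ f_{\mu}$ and, since $f_{\mu}=\tilde h^{-1}\circ g_{\alpha}\circ\tilde h$ is surjective, $f_{\lambda_{*}}=f_{\mu}$; then the single-map theorem of \cite{[BK]} applies verbatim. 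This buys you something real: the paper's displayed computation only controls the action of the compositions $\mathcal{F}_{\sigma_{j}}$ on $X_{i,0}$ for one $\sigma$, whereas condition $(1)$ requires each individual $f_{\lambda}$ to permute $P_{i}$ cyclically, and it is exactly your collapse argument that closes that gap cleanly. Your approach also makes explicit a fact the paper leaves unsaid: both the conjugacy hypothesis and conditions $(1)$--$(3)$ themselves (since by $(3)$ the partitions separate points, and every $f_{\lambda}$ sends the atom of $x$ in $P_{i}$ to the same next atom) force all the maps of the IFS to coincide, so the theorem is really about a single adding-machine-conjugate map. The ``genuine obstacle'' you flag at the end is thus resolved by your own cancellation step; nothing further is needed.
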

\begin{proof}
$(\Rightarrow)$Consider $\sigma\in \Lambda^{\mathbb{Z}_{+}}$ such that $\mathcal{F}_{\sigma}$ is topological conjugate with \\$g_{\alpha}:\Delta_{\alpha}\rightarrow \Delta_{\alpha}$, so there exist $h:X\rightarrow \Delta_{\alpha}$ such that $h \circ \mathcal{F}_{\sigma_{n}}=g_{\alpha}^{n}\circ  h$ for all $n\geq 1$. Consider $Q_{i}=\{Y_{i,0},...,Y_{i,m_{i-1}}\}$ and $P_{i}=\{X_{i,0},...,X_{i,m_{i-1}}\}$, defined in the  proof of Theorem 2.3. in \cite{[BK]}. So, for each $0\leq j\leq m_{i-1}$, $$\mathcal{F}_{\sigma_{j}}(X_{i,0})=h^{-1}\circ g_{\alpha}^{j}\circ h(X_{i,0})=h^{-1}\circ g_{\alpha}^{j}(Y_{i,0})=h^{-1}(Y_{i,j})=X_{i,j}.$$
The other properties are similar to proof of Theorem 2.3. \cite{[BK]}\\
$(\Leftarrow)$  By the proof of Theorem 2.3 in \cite{[BK]}, the set $\cap_{i\geq 1}Y_{i,t_{i}}$ is a singleton, where \\$t_{i}\in \{0,...,m_{i}-1\}$, for all $i\geq 1$.\\
  This property enable us to define $h:X\rightarrow \Delta_{\alpha}$ as follows.\\  Take $x\in X $. For each  $i\geq 1$, there is a unique $k_{i}\leq m_{i}-1$,  such that $x\in X_{i,k_{i}}$. On the other hand, $\cap_{i\geq 1}Y_{i,k_{i}}$ consists a single point $y$. Then, we can put $h(x)=y$. Again, by proof of Theorem 2.3. in \cite{[BK]}, $h$ is continuous and bijective. Now, we show that
$h~\circ\mathcal{F}_{\sigma_{n}}=g_{\alpha}^{n}\circ~h$ for all $n\geq 1$. \\
Let $x\in X$. For each $i\geq 1$, there are unique $X_{i,k_{i}}\in P_{i}$  containing $x$ and  unique $t_{i}\leq m_{i}-1$ congruent to $k_{i}+1$ modulo $m_{i}$. Then, each of the points $h~\circ\mathcal{F}_{\sigma_{1}}(x)$ and $g_{\alpha}\circ~h(x)$ is an element of $\cap_{i\geq 1}Y_{i,t_{i}}$. Since the set $\cap_{i\geq 1}Y_{i,t_{i}}$ consists  a single point, we have $h~\circ\mathcal{F}_{\sigma_{1}}(x)=g_{\alpha}\circ~h(x).$
\\ Repeating the above technique with $f_{\lambda_{1}}(x)$ instead of $x$, we get  $$h~\circ \mathcal{F}_{\sigma_{2}}(x)=h(f_{\lambda_{2}}(f_{\lambda_{1}}(x)))=
g_{\alpha}\circ~h((f_{\lambda_{1}}(x)))=g_{\alpha}(g_{\alpha}(h(x))).$$
So, by induction on $n$ we have
$$h(\mathcal{F}_{\sigma_{n}}(x))=h(f_{\lambda_{n}}(\mathcal{F}_{\sigma_{n-1}}(x)))=
g_{\alpha}(h(\mathcal{F}_{\sigma_{n-1}}(x)))=g_{\alpha}(g_{\alpha}^{n-1}(h(x)))=g_{\alpha}^{n}(h(x)).$$
\end{proof}
The symmetric tent map $T_{a}:[0,1]\rightarrow [0,1]$ with $a\in [0,2]$ is defined as follows:
\[ T_{a}(x) = \left\lbrace
  \begin{array}{c l}
 ax& \text{if ~$~0 \leq x<\frac{1}{2}$},\\
    -ax+a & \text{if ~$~\frac{1}{2}\leq x\leq 1$}
 \end{array}
  \right. \]
\begin{theorem}\cite{[AB]}
Let $\alpha=(j_{1},j_{2},...)$ be a sequence of integers greater that $1$. The set of parameters $a$, such that for the tent map $T_{a}$ the restriction of $T_{a}$ to the closure of the orbit of $c=\frac{1}{2}$ is topologically conjugate to $g_{\alpha}:\Delta_{\alpha}\rightarrow \Delta_{\alpha}$, is dense in $[\sqrt{2},2]$.
\end{theorem}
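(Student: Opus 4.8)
The plan is to translate the statement into kneading theory for the tent family and then reduce it, via Theorem \ref{tha}, to an explicit combinatorial construction of kneading sequences inside a prescribed parameter window.

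First I would assemble the standard facts about the tent family on $[\sqrt2,2]$. On this interval $T_a$ is locally eventually onto and has no wandering intervals; the kneading invariant $a\mapsto \underline K(T_a)$ — the itinerary of the critical value with respect to the partition of $[0,1]$ at $c=\tfrac12$ — is a strictly monotone bijection from $[\sqrt2,2]$ onto the set of shift-maximal (``admissible'') sequences in the range $[\underline K(T_{\sqrt2}),\underline K(T_2)]$, and the preimage of a cylinder $[u]$ (the sequences beginning with a finite word $u$) is a subinterval of $[\sqrt2,2]$ whose length tends to $0$ as $|u|\to\infty$. I would then record the bridge to Theorem \ref{tha}: if $\underline K(T_a)$ is an aperiodic \emph{regular Toeplitz sequence} with nested essential periods $p_1\mid p_2\mid\cdots\to\infty$, then $c$ is non-periodic and recurrent, $X:=\overline{\mathrm{orb}(c)}$ is a Cantor set invariant under $T_a$, and applying Theorem \ref{tha} to the one-map system $\{X;T_a|_X\}$ with $\sigma$ constant yields $T_a|_X$ topologically conjugate to $g_\beta$ with $\beta=(p_1,p_2/p_1,p_3/p_2,\dots)$: the clopen covers $P_i$ are the cylinders cut out by the level-$i$ Toeplitz skeleton of the itinerary (they consist of $p_i$ sets cyclically permuted by $T_a$), they nest because $p_i\mid p_{i+1}$, and the nested intersections are single points precisely because $T_a$ has no wandering intervals. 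Finally, by the classification of adding machines up to topological conjugacy (two are conjugate iff they determine the same supernatural number), and since $m_1\mid m_2\mid\cdots$ with $m_i=j_1\cdots j_i\to\infty$, one has $g_\beta\cong g_\alpha$ whenever $(p_i)$ is a subsequence of $(m_i)$; in particular for $(p_i)=(m_k,m_{k+1},m_{k+2},\dots)$ with any $k$, so the first period $p_1=m_k$ may be taken as large as we please.

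Granting all this, density is obtained as follows. Fix a nonempty open $J\subseteq[\sqrt2,2]$; choose $a_0$ in the interior of $J$ with $\underline K(T_{a_0})$ aperiodic, and let $u$ be a prefix of $\underline K(T_{a_0})$ long enough that the preimage of $[u]$ lies in $J$. Choose $k$ with $m_k>|u|$ and build a sequence $x$ which is a regular Toeplitz sequence with essential periods $p_1=m_k\mid p_2=m_{k+1}\mid p_3=m_{k+2}\mid\cdots$: prescribe the level-$1$ block of length $m_k$ so that its first $|u|$ entries form $u$ and its single ``hole'' sits at a position $\ge|u|$, then fill the successive levels, each contributing one further hole per period, choosing the free entries greedily so as to keep $x$ shift-maximal (and, since $u$ is already a prefix of an admissible sequence and $a_0$ is interior to $J$, within the admissible range). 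Then $x$ is aperiodic and $x\in[u]$, so the unique tent parameter $a_x$ with $\underline K(T_{a_x})=x$ lies in $J$, and by the bridge to Theorem \ref{tha} together with the adding-machine classification, $T_{a_x}|_{\overline{\mathrm{orb}(c)}}\cong g_{(m_k,\,m_{k+1}/m_k,\,m_{k+2}/m_{k+1},\,\dots)}\cong g_\alpha$. As $J$ was arbitrary, the set of such parameters is dense in $[\sqrt2,2]$.

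I expect the genuine difficulty — and the technical core of \cite{[AB]} — to lie in the last step of the construction: producing a single sequence $x$ that is simultaneously shift-maximal (hence an actual tent kneading sequence, realised by a parameter in $[\sqrt2,2]$), still begins with the localizing word $u$, and is a regular Toeplitz sequence with the prescribed period tower $(m_k,m_{k+1},\dots)$. The short shifts of $x$ are harmless because $u$ is already a prefix of a shift-maximal sequence, and the long shifts are controlled by the periodic skeleton; what needs care is coupling the two regimes across the boundary at $|u|$ while keeping each level's single hole in a position that does not violate maximality, and keeping the skeleton densities $1-1/p_i\to1$ so that no extra points appear over the adding-machine factor. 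Once this combinatorial juggling is carried out, the monotonicity and the ``no wandering intervals'' rigidity of the tent family do the rest.
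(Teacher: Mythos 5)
The first thing to say is that the paper itself offers no proof of this statement: it is imported verbatim, with attribution, from Alvin and Brucks \cite{[AB]}, so there is no in-paper argument to measure your proposal against. Your sketch is a reasonable reconstruction of the kneading-theoretic strategy one would expect (localize in parameter space by a finite admissible word via monotonicity of the kneading invariant, then realize an infinite admissible sequence with the desired combinatorial structure inside that cylinder), and this is broadly the spirit of \cite{[AB]}, which, however, is organized around the kneading map $Q(k)$ and cutting times rather than Toeplitz skeletons.

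That said, two steps you treat as bridges are genuine gaps. First, a regular Toeplitz kneading sequence does not by itself make $T_a$ restricted to $\overline{\mathrm{orb}(c)}$ conjugate to the odometer. The orbit closure of a Toeplitz sequence in the shift is in general only an almost one-to-one extension of its odometer factor: with one hole per period at each level, filled by both symbols along infinitely many levels, the fiber over one odometer orbit has cardinality two, so the symbolic system is \emph{not} conjugate to $g_\alpha$. One must argue separately that passing from the subshift to the interval collapses exactly these fibers, i.e.\ verify condition $(3)$ of Theorem \ref{tha} for the actual subsets of $[0,1]$; this is precisely the dividing line between adding machines and the ``strange adding machines'' studied in the same circle of papers, and ``$T_a$ has no wandering intervals'' does not dispatch it, since the relevant sets are pieces of a Cantor set whose convex hulls need not be disjoint. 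Second, the assertion that $a\mapsto \underline{K}(T_a)$ is a bijection onto \emph{all} shift-maximal sequences in the range overstates what is true; what one has is monotonicity together with the intermediate-value (full-family) property of the tent family, and the construction of a single sequence that is simultaneously shift-maximal, begins with the localizing word $u$, and carries the prescribed period tower is the entire technical content of \cite{[AB]} rather than a finishing touch. You flag this last difficulty yourself, which is to your credit, but as written the argument establishes at best a dense set of parameters for which the restriction is an almost one-to-one extension of $g_\alpha$, not a conjugacy.
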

So, we have the following result.
\begin{corollary}
 Let $\alpha=(j_{1},j_{2},...)$ be a sequence of integers greater than $1$ and $m_{i}=j_{1}.j_{2}....j_{i}$ for each $i$. Let $X$ be a compact Hausdorff space and $\mathcal{F}=\{X; f_{\lambda}|\lambda\in\Lambda\} $ be an IFS with the following properties:\\
$(1)$ For each positive integer $i$, there is a cover $P_{i}$ of $X$ consisting of $m_{i}$ pairwise disjoint, nonempty, clopen sets which are cyclically permuted by $\mathcal{F}$.\\
$(2)$ For each positive integer $i$, $P_{i+1}$ partitions $P_{i}$.\\
$(3)$ If $V_{1}\supset V_{2}\supset V_{3}\supset ...$ is a nested sequence with $V_{i}\in P_{i}$ for each $i$, then $\bigcap_{i=1}^{\infty}V_{i}$ consists of a single point.\\Then the set of parameters $s$ for which the restriction of $T_{s}$ to the closure of the orbit of $c=\frac{1}{2}$ is topologically conjugate to $\mathcal{F}_{\sigma}$, for every $\sigma\in \Lambda^{\mathbb{Z}_{+}}$, is dense in $[\sqrt{2},2]$.
\end{corollary}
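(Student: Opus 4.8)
The plan is to obtain this statement as a direct consequence of Theorem \ref{tha}, the preceding theorem of \cite{[AB]}, and the transitivity of topological conjugacy. First I would use Theorem \ref{tha}: since $\mathcal{F}=\{X;f_{\lambda}\mid\lambda\in\Lambda\}$ satisfies hypotheses $(1)$–$(3)$, for every $\sigma\in\Lambda^{\mathbb{Z}_{+}}$ there is a homeomorphism $h_{\sigma}:X\rightarrow\Delta_{\alpha}$ with $g_{\alpha}^{\,n}\circ h_{\sigma}=h_{\sigma}\circ\mathcal{F}_{\sigma_{n}}$ for all $n\geq 1$; that is, $\mathcal{F}_{\sigma}$ is topologically conjugate to $g_{\alpha}$.

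Next I would invoke the preceding theorem (from \cite{[AB]}): the set
$$D=\bigl\{\,s\in[\sqrt{2},2]\ :\ T_{s}|_{\overline{\mathrm{orb}(c)}}\ \text{is topologically conjugate to}\ g_{\alpha}\,\bigr\}$$
is dense in $[\sqrt{2},2]$, where $c=\tfrac12$. For each $s\in D$ I would fix a homeomorphism $\phi_{s}:\overline{\mathrm{orb}(c)}\rightarrow\Delta_{\alpha}$ with $\phi_{s}\circ T_{s}=g_{\alpha}\circ\phi_{s}$, hence $\phi_{s}\circ T_{s}^{\,n}=g_{\alpha}^{\,n}\circ\phi_{s}$ for all $n\geq 1$.

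Finally I would check that $D$ is contained in the parameter set described in the statement. Fix $s\in D$ and an arbitrary $\sigma\in\Lambda^{\mathbb{Z}_{+}}$, and set $\psi=\phi_{s}^{-1}\circ h_{\sigma}:X\rightarrow\overline{\mathrm{orb}(c)}$, which is a homeomorphism. Then for every $n\geq 1$,
$$T_{s}^{\,n}\circ\psi=T_{s}^{\,n}\circ\phi_{s}^{-1}\circ h_{\sigma}=\phi_{s}^{-1}\circ g_{\alpha}^{\,n}\circ h_{\sigma}=\phi_{s}^{-1}\circ h_{\sigma}\circ\mathcal{F}_{\sigma_{n}}=\psi\circ\mathcal{F}_{\sigma_{n}},$$
so $T_{s}|_{\overline{\mathrm{orb}(c)}}$ is topologically conjugate to $\mathcal{F}_{\sigma}$ in the sense of the definition of Section~2. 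Since $\sigma$ was arbitrary, every $s\in D$ belongs to the set of parameters in the statement, and as $D$ is dense in $[\sqrt{2},2]$, so is that set.

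No genuine difficulty arises; the only point demanding a little care is that the single homeomorphism $\phi_{s}$ furnished by \cite{[AB]} must serve for all $\sigma$ at once, and that composing it with $h_{\sigma}$ produces exactly the intertwining relation $T_{s}^{\,n}\circ\psi=\psi\circ\mathcal{F}_{\sigma_{n}}$ required by the IFS‑conjugacy definition — which is the short computation displayed above.
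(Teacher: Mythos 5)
Your proposal is correct and is exactly the argument the paper intends: the corollary is stated with no proof beyond ``So, we have the following result,'' i.e.\ it follows by combining Theorem \ref{tha} (which gives a conjugacy $h_{\sigma}$ from $\mathcal{F}_{\sigma}$ to $g_{\alpha}$) with the density statement from \cite{[AB]} and composing the two conjugacies. Your explicit verification that $\psi=\phi_{s}^{-1}\circ h_{\sigma}$ intertwines $T_{s}^{n}$ with $\mathcal{F}_{\sigma_{n}}$ is precisely the routine check the paper leaves implicit.
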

Conditions $(1)-(3)$ in Theorem \ref{tha} imply that $X$ is a zero dimensional set. So we have the following result:
\begin{corollary}
Let $X$ be a compact metric space and $\mathcal{F}=\{X; f_{\lambda}|\lambda\in\Lambda\} $ be an \textbf{IFS}. If there exists an adding machine map $g_{\alpha}$ such that for every $\sigma\in \Lambda^{\mathbb{Z}_{+}}$, $\mathcal{F}_{\sigma}$ is topologically conjugate to $g_{\alpha}$ then $X$ is a zero dimensional space.
\end{corollary}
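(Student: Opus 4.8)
The plan is to invoke Theorem~\ref{tha} to convert the conjugacy hypothesis into the combinatorial structure $(1)$--$(3)$, and then to extract a clopen basis for $X$ from the resulting nested clopen partitions. Since the hypothesis of the corollary states exactly that $\mathcal{F}_{\sigma}$ is topologically conjugate to $g_{\alpha}$ for every $\sigma\in\Lambda^{\mathbb{Z}_{+}}$, the implication $(\Rightarrow)$ of Theorem~\ref{tha}, applied with this $\alpha$ and $m_{i}=j_{1}j_{2}\cdots j_{i}$, provides for each positive integer $i$ a cover $P_{i}$ of $X$ by $m_{i}$ pairwise disjoint nonempty clopen sets, with $P_{i+1}$ refining $P_{i}$, and with the property that any nested sequence $V_{1}\supset V_{2}\supset\cdots$ with $V_{i}\in P_{i}$ has $\bigcap_{i}V_{i}$ equal to a single point. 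It then remains to show that a compact metric space carrying such a system of partitions is zero dimensional, i.e.\ that it has a basis of clopen sets.

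For this I would fix a point $x\in X$ and an open neighbourhood $U$ of $x$, and for each $i$ let $V_{i}$ be the unique member of $P_{i}$ containing $x$. Because $P_{i+1}$ partitions $P_{i}$, the set $V_{i+1}$ is contained in exactly one member of $P_{i}$, and that member contains $x$, hence equals $V_{i}$; thus $V_{i+1}\subseteq V_{i}$, so $(V_{i})_{i}$ is a legitimate nested sequence and condition $(3)$ yields $\bigcap_{i\geq 1}V_{i}=\{x\}$. The crucial remaining point is that some $V_{i}$ already lies inside $U$: otherwise the sets $C_{i}=V_{i}\setminus U=V_{i}\cap(X\setminus U)$ would form a decreasing sequence of nonempty closed subsets of the compact space $X$, hence would have nonempty intersection, contradicting $\bigcap_{i}C_{i}=\{x\}\setminus U=\emptyset$. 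Once such a $V_{i}$ is found it is a clopen neighbourhood of $x$ contained in $U$; letting $x$ and $U$ vary shows that $X$ has a clopen basis and is therefore zero dimensional.

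The proof is short, and the only genuine subtlety is the compactness step: one must note that $C_{i}=V_{i}\cap(X\setminus U)$ is closed precisely because $V_{i}$ is clopen, and that the $C_{i}$ are nested, so that the finite-intersection characterisation of compactness applies and collides with the singleton conclusion of $(3)$. Everything else --- the existence of the $P_{i}$, the refinement inclusion $V_{i+1}\subseteq V_{i}$, and the standard equivalence ``clopen basis $\Leftrightarrow$ zero dimensional'' for compact metric spaces --- is either immediate from Theorem~\ref{tha} or routine.
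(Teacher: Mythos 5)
Your proof is correct and follows the same route as the paper, which simply remarks that conditions $(1)$--$(3)$ of Theorem~\ref{tha} imply $X$ is zero dimensional and leaves the verification to the reader. Your compactness argument showing that some $V_{i}$ lies inside a given neighbourhood $U$ supplies exactly the detail the paper omits.
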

Let $f$ be a continuous map on $X$ and $Q$ be an open cover of $X$. Let $N(Q)$ denote the minimal cardinality of a subcover of  $Q$. The join of open covers $Q_{1}, . . . , Q_{n}$ is $$\bigvee_{k=1}^{n}=\{X_{1},...,X_{n}; X_{k}\in Q_{k}, 1\leq k\leq n\}$$
and put $f^{-1}(Q)=\{f^{-1}(V); V\in Q\}$. Define the topological entropy of $f$ with respect
to an open cover $Q$ by $h(f|Q)=\displaystyle\lim_{n\rightarrow \infty}\frac{1}{n}\log (\bigvee_{k=1}^{n}f^{-k}(Q))$ and the topological entropy of $f$ by $h(f)=\sup_{Q} h(f\mid Q)$ where the supremum is taken over all open covers $Q$ of $X$.
\\
In [ \cite{[HK]}Theorem 11.3.13], the authors prove that if $f$ is a continuous map of an interval with zero topological entropy and $S$ is a closed topological transitive invariant set without periodic orbits, then the restriction of $f$ to $S$ is topologically conjugate to an adding machine map.
\begin{remark}Let $\mathcal{F} $ be an IFS containing only one mapping $f$ which satisfies the conditions (1)-(3) in Theorem \ref{tha} and consider  the sequence $P_{1}\preceq P_{2}\preceq \cdots$ of open covers in these statements. This is clear that   $f^{-k}(P_{i})=P_{i}$ for all $k,i\geq 1$, consequently $h(f|Q)=0$, for all $i\geq 1$. \\Now, let $ Q$ be a finite open cover of $X$. Statement $(3)$ of Theorem \ref{tha} implies that there exists $i_{0}\geq 1$ such that $P_{i_{0}}$ is a refinement of $Q$.
Then \\$h(f|Q)\leq h(f\mid P_{i_{0}})=0$ and consequently   $h(f)=0$. For more details see  Theorem $11.3.13$  of \cite{[HK]}, Corollary $5.4.8$ of \cite{[GA]} and Page $57$ of \cite{[KA]}.\end{remark}
%We are going to investigate this relation for IFS in our next work.}
\section{Minimal IFS with regularly recurrent points and adding machine maps}
Suppose that $\mathcal{F}$ is minimal. Denote by $\emph{NM}(\mathcal{F})$ the set of positive integers $i$ such that  some subset $M$ of $X$ is $\mathcal{F}^{i}-$minimal but is not $\mathcal{F}^{i}-$minimal for $j=1,...,i-1$. \\  By Lemma \ref{lea}, for every $n\in \emph{NM}(\mathcal{F})$ there is a unique cover $C_{n}$ of $X$ which consists of the $n-$pairwise disjoint $\mathcal{F}^{n}-$minimal sets.\\
\begin{definition}
The point $x\in X$ is said to be regularly recurrent if for every neighborhood $U$ of $x$, there is a positive integer $n$ such that $\mathcal{F}_{\sigma_{ni}}(x)\in U$, for every $i\geq 0$ and every $ \sigma\in \Lambda^{\mathbb{Z}_{+}}$.
\end{definition}
A point $x\in X$ is called a periodic point of the IFS $\mathcal{F}$ if there is a finite sequence $\lambda_{1},\lambda_{2},...,\lambda_{n}$ of elements in $\Lambda$ such that $f_{\lambda_{n}}\circ f_{\lambda_{n-1}}\circ\cdots f_{\lambda_{1}}(x)=x$ \cite{[B]}.
Our next lemmas are used in the proof of Theorem \ref{ts}.
\begin{lemma}\label{lee}
Suppose that $\mathcal{F}=\{X; f_{\lambda}|\lambda\in\Lambda\} $ is minimal, if $X$ is infinite and $\mathcal{F}$ has some non-periodic regularly recurrent points then $S(\mathcal{F})$ is infinite.
\end{lemma}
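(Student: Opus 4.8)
The plan is to prove that the set $NM(\mathcal{F})$ is infinite; since $S(\mathcal{F})=NM(\mathcal{F})$, this is the claim. Fix a non‑periodic regularly recurrent point $x\in X$. First I would extract, for each integer $k\geq1$, a witness $n_{k}$ of regular recurrence for the ball $B(x,1/k)$: a positive integer with $\mathcal{F}_{\sigma_{n_{k}i}}(x)\in B(x,1/k)$ for all $i\geq0$ and all $\sigma\in\Lambda^{\mathbb{Z}_{+}}$. Reading these points as the iterates of $x$ under the \textbf{IFS} $\mathcal{F}^{n_{k}}$, the closed set $M_{k}:=\overline{\{\mathcal{F}_{\sigma_{n_{k}i}}(x):i\geq0,\ \sigma\in\Lambda^{\mathbb{Z}_{+}}\}}$ satisfies $x\in M_{k}\subseteq\overline{B(x,1/k)}$, and $\varphi(M_{k})\subseteq M_{k}$ for every length‑$n_{k}$ composition $\varphi=f_{\mu_{n_{k}}}\circ\cdots\circ f_{\mu_{1}}$.

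Next I would apply Lemma \ref{lea} with $n=n_{k}$: there is an $\mathcal{F}^{n_{k}}$‑minimal set $M$ and some $t_{k}\leq n_{k}$ such that $X=M\sqcup\mathcal{F}(M)\sqcup\cdots\sqcup\mathcal{F}^{t_{k}-1}(M)$ is a clopen partition whose blocks are exactly the $\mathcal{F}^{n_{k}}$‑minimal subsets of $X$ (and exactly the $\mathcal{F}^{t_{k}}$‑minimal ones). Let $M^{(k)}$ be the block containing $x$; being $\mathcal{F}^{n_{k}}$‑minimal it satisfies $\mathcal{F}_{\sigma_{n_{k}}}(M^{(k)})=M^{(k)}$ for all $\sigma$, so every iterate $\mathcal{F}_{\sigma_{n_{k}i}}(x)$ stays in $M^{(k)}$, whence $M_{k}\subseteq M^{(k)}$. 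The crucial step is to upgrade this to $M_{k}=M^{(k)}$. Assume $M_{k}\subsetneq M^{(k)}$ and pick one length‑$n_{k}$ composition $\varphi$; since $\varphi(M_{k})\subseteq M_{k}$ and $M_{k}$ is compact, $A:=\bigcap_{m\geq1}\varphi^{m}(M_{k})$ is a nonempty set with $\varphi(A)=A$ and $A\subseteq M_{k}\subsetneq M^{(k)}$, which contradicts the $\mathcal{F}^{n_{k}}$‑minimality of $M^{(k)}$ (choose $\sigma$ beginning with the word defining $\varphi$, so $\varphi=\mathcal{F}_{\sigma_{n_{k}}}$). Hence $M^{(k)}=M_{k}\subseteq\overline{B(x,1/k)}$.

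Finally I would let $q_{k}\leq t_{k}$ be the least integer for which $M^{(k)}$ is $\mathcal{F}^{q_{k}}$‑minimal (such a $q_{k}$ exists because $M^{(k)}$ is $\mathcal{F}^{t_{k}}$‑minimal); then $q_{k}\in NM(\mathcal{F})$ and $M^{(k)}$ is a block of the unique cover $C_{q_{k}}$. Suppose, for contradiction, that $NM(\mathcal{F})$ is finite. Then some value $q^{*}$ equals $q_{k}$ for all $k$ in an infinite set $K$; for each such $k$, the block of the \emph{fixed} partition $C_{q^{*}}$ containing $x$ must be $M^{(k)}$, so $M^{(k)}$ is one and the same set $M^{*}$ for every $k\in K$, and therefore $M^{*}\subseteq\bigcap_{k\in K}\overline{B(x,1/k)}=\{x\}$. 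Then $\{x\}$ is $\mathcal{F}^{q^{*}}$‑minimal, which forces $\mathcal{F}_{\sigma_{q^{*}}}(x)=x$ for every $\sigma$; taking $\sigma$ with any prescribed first $q^{*}$ symbols yields a word $\lambda_{1},\dots,\lambda_{q^{*}}$ with $f_{\lambda_{q^{*}}}\circ\cdots\circ f_{\lambda_{1}}(x)=x$, i.e. $x$ is periodic — contrary to hypothesis. Hence $NM(\mathcal{F})$, and so $S(\mathcal{F})$, is infinite.

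I expect the real obstacle to be the equality $M_{k}=M^{(k)}$ in the second paragraph: recognizing that the minuscule invariant closed set carrying the $n_{k}$‑orbit of $x$ is itself $\mathcal{F}^{n_{k}}$‑minimal, hence one of the canonical clopen blocks supplied by Lemma \ref{lea}. After that the argument is bookkeeping; the non‑periodicity of $x$ is used only at the very end, where the blocks are forced to collapse to $\{x\}$, and the infiniteness of $X$ is what keeps the hypotheses non‑vacuous.
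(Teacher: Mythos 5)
Your proof is correct and follows essentially the same route as the paper: regular recurrence places an $\mathcal{F}^{n}$-minimal set containing $x$ inside an arbitrarily small neighbourhood of $x$, and non-periodicity prevents the minimal period of that set from staying bounded. The only real difference is organizational --- the paper directly exhibits, for each $l$, an element of $NM(\mathcal{F})$ exceeding $l$ by choosing a neighbourhood avoiding the first $l$ iterates, whereas you run a pigeonhole contradiction at the end; your argument that $M_{k}$ coincides with the minimal block $M^{(k)}$ (via the nested intersection $\bigcap_{m}\varphi^{m}(M_{k})$) supplies a detail the paper merely asserts.
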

\begin{proof}
 Fix $l\geq 1$. We show that there exists $n\in \emph{NM}(\mathcal{F})$ such that $n>l$. \\Let $x$ be a non-periodic regularly recurrent point in $X$. There exists a neighborhood $U\subset X$ of $x$ such that $\mathcal{F}_{\sigma_{i}}(x)\notin\overline{U}$, for each $\sigma\in \Lambda^{\mathbb{Z}_{+}}$ and $i=1,...,l$. Since $x$ is a regularly recurrent point, there is a positive integer $t$ such that for each non-negative integer $i$, and each $\sigma\in \Lambda^{\mathbb{Z}_{+}}$ we have $\mathcal{F}_{\sigma_{ti}}(x)\in U$.\\So Lemma \ref{lea} implies that the set $M= \overline{\{\mathcal{F}_{\sigma_{ti}}(x)\}_{\sigma\in \Lambda^{\mathbb{Z}_{+}}, i>1}}$ is $\mathcal{F}^{t}-$minimal. Consider $n$ as the least positive integer for which $M$ is $\mathcal{F}^{n}-$minimal. So, $n\in S$ and $M\subset \overline{U},$ hence $n>l$.
\end{proof}
\begin{lemma}\label{lef}
Suppose that $\mathcal{F}=\{X; f_{\lambda}|\lambda\in\Lambda\} $ is minimal, $X$ is infinite and $\mathcal{F}$ has some non-periodic regularly recurrent points. Suppose that $l,~n\in S$ and $l$ is a multiple of $n$, then $C_{l}$ refines $C_{n}$.
\end{lemma}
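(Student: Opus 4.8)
The plan is to show directly that every element of $C_l$ is contained in some element of $C_n$; since $C_l$ and $C_n$ are both partitions of $X$ (by Lemma \ref{lea}), this is exactly what it means for $C_l$ to refine $C_n$. Write $l = kn$ for a positive integer $k$. Let $M_l$ be one of the $\mathcal{F}^{l}$-minimal sets in $C_l$, and pick a point $y \in M_l$. By Lemma \ref{lea} applied with $\mathcal{F}^n$ in place of $\mathcal{F}$ (or just applied to the integer $n \in S = \emph{NM}(\mathcal{F})$), the space $X$ is the disjoint union of the $\mathcal{F}^n$-minimal sets $M, \mathcal{F}(M), \dots, \mathcal{F}^{n-1}(M)$, where $M$ is $\mathcal{F}^n$-minimal; so $y$ lies in exactly one of them, say $y \in \mathcal{F}^{r}(M)$ with $0 \le r \le n-1$. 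Call this set $M_n \in C_n$. The goal is then $M_l \subseteq M_n$.

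First I would observe that $M_n$ is $\mathcal{F}^{kn}$-invariant: since $M_n$ is $\mathcal{F}^n$-minimal we have $\mathcal{F}_{\sigma_n}(M_n) = M_n$ for every $\sigma$, and iterating this $k$ times (regrouping the length-$kn$ word $\lambda_1,\dots,\lambda_{kn}$ into $k$ blocks of length $n$) gives $\mathcal{F}_{\sigma_{kn}}(M_n) = M_n$ for every $\sigma \in \Lambda^{\mathbb{Z}_+}$. Now consider $A := M_l \cap M_n$. It contains $y$, hence is nonempty and, being the intersection of two closed sets, is closed. I claim $A$ is $\mathcal{F}^{l}$-invariant from $M_l$'s side in the following sense: for $z \in A$ and any length-$l$ word, $\mathcal{F}_{\sigma_l}(z) \in \mathcal{F}_{\sigma_l}(M_l) \subseteq M_l$ because $M_l$ is $\mathcal{F}^l$-minimal (so $\mathcal{F}_{\sigma_l}(M_l)=M_l$), and also $\mathcal{F}_{\sigma_l}(z) = \mathcal{F}_{\sigma_{kn}}(z) \in \mathcal{F}_{\sigma_{kn}}(M_n) = M_n$ by the previous sentence; thus $\mathcal{F}_{\sigma_l}(A) \subseteq A$. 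Combined with the fact that the forward $\mathcal{F}^l$-orbit closure of any point of the $\mathcal{F}^l$-minimal set $M_l$ is all of $M_l$, this forces $A = M_l$, i.e. $M_l \subseteq M_n$.

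The main obstacle I anticipate is making the last implication airtight: I must be careful about the precise meaning of "$\mathcal{F}^l$-minimal" as given in the Definition (that $\mathcal{F}_{\sigma_l}(M_l) = M_l$ for all $\sigma$, and no proper nonempty subset is $\mathcal{F}^l$-invariant for any $\sigma$) versus the "orbit closure" characterization of minimality, and to check that $A$, which is a nonempty closed set satisfying $\mathcal{F}_{\sigma_l}(A) \subseteq A$ for every $\sigma$, must actually equal $M_l$ rather than merely be contained in it. One clean way around this is to take $A' := \overline{\bigcup_{\sigma,\, i \ge 0} \mathcal{F}_{\sigma_{li}}(z)}$ for a fixed $z \in A$: by the same Lemma \ref{lea}-type argument $A'$ is $\mathcal{F}^l$-minimal, $A' \subseteq M_l$ (as $M_l$ is closed and $\mathcal{F}^l$-invariant and contains $z$), and minimal sets are equal or disjoint, so $A' = M_l$; but also $A' \subseteq M_n$ since $M_n$ is closed, $\mathcal{F}^{kn}$-invariant hence $\mathcal{F}^{l}$-invariant, and contains $z$. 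Hence $M_l = A' \subseteq M_n$. Since $M_l$ was an arbitrary element of $C_l$, every element of $C_l$ sits inside an element of $C_n$, and as both are partitions of $X$ this is precisely the statement that $C_l$ refines $C_n$. $\hfill\square$
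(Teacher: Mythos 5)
Your proposal is correct and follows essentially the same route as the paper: both identify an element $M_l\in C_l$ with the closure of the $\mathcal{F}^{l}$-orbit of a point $x\in M_l$, and then use the fact that (since $l$ is a multiple of $n$) this orbit closure sits inside the $\mathcal{F}^{n}$-minimal element of $C_n$ containing $x$. Your extra care about $\mathcal{F}^{l}$-invariance of the $C_n$-element and the ``equal or disjoint'' dichotomy just makes explicit what the paper's one-line containment of orbit sets leaves implicit.
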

\begin{proof}
Let $M\in C_{l}$ and $x\in M$. Since $\overline{\{\mathcal{F}_{\sigma_{li}}(x)\}}_{\sigma\in \Lambda^{\mathbb{Z}_{+}},i\geq 0}\subset M$ is a $\mathcal{F}^{l}-$minimal set. So $M=\overline{\{\mathcal{F}_{\sigma_{li}}(x)\}}_{\sigma\in \Lambda^{\mathbb{Z}_{+}},i\geq 0}$. This is clear that \\ $\{\mathcal{F}_{\sigma_{li}}(x)\}_{\sigma\in \Lambda^{\mathbb{Z}_{+}},i\geq 0}\subset\{\mathcal{F}_{\sigma_{ni}}(x)\}_{\sigma\in \Lambda^{\mathbb{Z}_{+}},i\geq 0}$. So $M$ is contained in $\overline{\{\mathcal{F}_{\sigma_{ni}}(x)\}}_{\sigma\in \Lambda^{\mathbb{Z}_{+}},i\geq 0}$ which is a $\mathcal{F}^{l}-$minimal set and consequently an element of $C_{n}$.
\end{proof}
\begin{lemma}\label{leg}
Suppose that $\mathcal{F}=\{X; f_{\lambda}|\lambda\in\Lambda\} $ is minimal, $X$ is infinite and $\mathcal{F}$ has some non-periodic regularly recurrent points. Let $l\in S$ and $n$ be a positive integer. If $l$ is a multiple of $n$, then $n\in \emph{NM}(\mathcal{F})$.
\end{lemma}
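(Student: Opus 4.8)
The plan is to show that $n \in \emph{NM}(\mathcal{F})$ by exhibiting an explicit subset of $X$ that is $\mathcal{F}^{n}$-minimal but fails to be $\mathcal{F}^{j}$-minimal for every $j < n$. The natural candidate is built from a non-periodic regularly recurrent point, exactly as in the proof of Lemma \ref{lee}: fix such a point $x$, and for the given $n \mid l$ consider the orbit-closure set
$$N = \overline{\{\mathcal{F}_{\sigma_{ni}}(x)\}}_{\sigma\in\Lambda^{\mathbb{Z}_{+}},\, i\geq 0}.$$
First I would note that since $x$ is regularly recurrent, for any neighborhood $U$ of $x$ there is a period $t$ with $\mathcal{F}_{\sigma_{ti}}(x) \in U$ for all $i \geq 0$ and all $\sigma$; combining this with the divisibility $n \mid l$ and applying Lemma \ref{lea} (as in Lemma \ref{lee}) shows that $N$ is $\mathcal{F}^{n}$-minimal, or at least that $N$ contains an $\mathcal{F}^{n}$-minimal set. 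So $n$ lies in the set $S$ (of integers $k$ for which some subset is $\mathcal{F}^{k}$-minimal), but we still must promote this to membership in $\emph{NM}(\mathcal{F})$, which additionally requires that no proper divisor of the "period" works.

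Next I would let $n_0 \leq n$ be the least positive integer for which $N$ (or the $\mathcal{F}^{n}$-minimal set it contains) is $\mathcal{F}^{n_0}$-minimal; then $n_0 \in \emph{NM}(\mathcal{F})$ by definition. The crux is to show $n_0 = n$. By Lemma \ref{lef}, since $l \in S$ and $n_0 \mid l$ (because $n_0 \mid n \mid l$), the cover $C_l$ refines $C_{n_0}$; but $|C_l| = l$ and $|C_{n_0}| = n_0$, and refinement of a partition into equal clopen pieces forces $n_0 \mid l$ with each element of $C_{n_0}$ split into exactly $l/n_0$ elements of $C_l$. Similarly $C_l$ refines $C_n$ with $|C_n| = n$. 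The key observation is then that the cyclic permutation structure (Lemma \ref{lea}, statement (1), together with the fact that $\mathcal{F}$ acts by shifting the $t$-minimal pieces cyclically) forces the refinement $C_l \succeq C_{n_0}$ to be compatible with the $\mathbb{Z}_l$- and $\mathbb{Z}_{n_0}$-actions, i.e., the quotient map $\mathbb{Z}_l \to \mathbb{Z}_{n_0}$ is the natural one. If $n_0$ were a proper divisor of $n$, one could then factor the $\mathbb{Z}_n$-coloring through $\mathbb{Z}_{n_0}$, contradicting that $N$ is not $\mathcal{F}^{j}$-minimal for $j$ strictly between $n_0$ and $n$ — more precisely, contradicting the minimality of the choice of $n$ as the period associated to the originally chosen neighborhood, or the uniqueness of $C_n$ from Lemma \ref{lea}(3).

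I expect the main obstacle to be the bookkeeping that links the combinatorics of the nested covers $C_{n_0} \preceq C_n \preceq C_l$ with the recurrence period: one must be careful that the integer $n_0$ produced by minimality genuinely equals $n$ and is not merely a divisor, and this is where the hypothesis $l \in S$ (rather than just $n \in S$) does real work through Lemma \ref{lef}. A clean way to organize this is to argue by contradiction: if $n \notin \emph{NM}(\mathcal{F})$, then the $\mathcal{F}^{n}$-minimal set $N$ is already $\mathcal{F}^{d}$-minimal for some proper divisor $d$ of $n$ (this reduction to divisors uses Lemma \ref{leaa} and the cyclic structure); but then $d \in S$, $d \mid l$, and Lemma \ref{lef} gives that $C_l$ refines $C_d$, whence $|C_l| = l$ is a multiple of $|C_d| = d$ and the pieces of $C_n$ each decompose consistently — yet the same pieces are supposed to be themselves $\mathcal{F}^{n}$-minimal and not $\mathcal{F}^{d}$-minimal, a contradiction. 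The remaining details are the routine verifications that orbit-closures of regularly recurrent points under the $n$-step maps are minimal (already carried out in Lemma \ref{lee}) and that partitions into clopen cyclically-permuted pieces refine each other only in the divisibility-compatible way (implicit in Lemma \ref{lea}).
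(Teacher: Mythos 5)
Your argument has a genuine gap at its final step. Everything up to ``the crux is to show $n_0=n$'' is reasonable (modulo the hedge about whether $N$ is itself $\mathcal{F}^{n}$-minimal or merely contains a minimal set), but the contradiction you then claim is not a contradiction. Assuming the first return time $n_0=d$ is a proper divisor of $n$, you deduce that $C_{l}$ refines $C_{d}$ compatibly with the quotient $\mathbb{Z}_{l}\to\mathbb{Z}_{d}$; all of that is true and is perfectly consistent with $n\notin NM(\mathcal{F})$. The clause ``yet the same pieces are supposed to be themselves $\mathcal{F}^{n}$-minimal and not $\mathcal{F}^{d}$-minimal'' presupposes the existence of a set that is $\mathcal{F}^{n}$-minimal but not $\mathcal{F}^{j}$-minimal for $j<n$ --- which is exactly the statement to be proved, not a hypothesis. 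Likewise, $n$ was never chosen as the minimal period of anything (it is an arbitrary divisor of $l$), so there is no ``minimality of the choice of $n$'' to contradict. In short, knowing that $X$ admits cyclic decompositions into $d$ pieces and into $l$ pieces does not by itself force the existence of one into $n$ pieces; such a decomposition has to be constructed.

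The missing idea --- and essentially the whole of the paper's proof --- is a direct construction from the $l$-piece decomposition. Since $l$ has first return time exactly $l$, Lemma \ref{lea} gives $X=M\sqcup\mathcal{F}(M)\sqcup\cdots\sqcup\mathcal{F}^{l-1}(M)$ for an $\mathcal{F}^{l}$-minimal set $M$. Writing $l=kn$ and setting $M_{1}=M\cup\mathcal{F}^{n}(M)\cup\cdots\cup\mathcal{F}^{(k-1)n}(M)$, one checks that $\mathcal{F}^{n}(M_{1})\subseteq M_{1}$ (because $\mathcal{F}^{l}(M)=M$), while $\mathcal{F}^{i}(M_{1})\cap M_{1}=\emptyset$ for $1\leq i\leq n-1$ (the shifted indices $jn+i$ are never $\equiv 0 \pmod{n}$, and the $l$ pieces are pairwise disjoint). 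Hence $M_{1}$ is a closed $\mathcal{F}^{n}$-invariant set with first return time exactly $n$, and its $\mathcal{F}^{n}$-minimality follows from the $\mathcal{F}^{l}$-minimality of $M$; this witnesses $n\in NM(\mathcal{F})$. Your orbit-closure $N$ built from the regularly recurrent point does coincide with such an $M_{1}$, but proving that its return time is exactly $n$ requires precisely this grouping of every $n$-th piece, not the cover-refinement bookkeeping you propose; the regularly recurrent point is not actually needed for this lemma.
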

\begin{proof}
Let $l=kn$ and $M$ be a $\mathcal{F}^{l}-$minimal set. Put \\$M_{1}=M\cup \mathcal{F}^{n}(M)\cup...\cup \mathcal{F}^{(k-1)n}(M).$ So, for all $1\leq i\leq n-1$,\\ $\mathcal{F}^{i}(M_{1})\cap M_{1}=\emptyset$ and these facts that $\mathcal{F}^{l}(M)\subseteq M$ and $l=kn$ imply that $\mathcal{F}^{n}(M_{1})\subseteq \bigcup_{j=1}^{k}\mathcal{F}^{jn}(M)\subseteq M_{1}$, which completes the proof.
\end{proof}
\begin{lemma}\label{leh}
Suppose that $\mathcal{F}=\{X; f_{\lambda}|\lambda\in\Lambda\} $ is minimal, $X$ is infinite and $\mathcal{F}$ has some non-periodic regularly recurrent points. Let $l$ and $n$ be two prime numbers in $\emph{NM}(\mathcal{F})$. Then $ln\in \emph{NM}(\mathcal{F})$.
\end{lemma}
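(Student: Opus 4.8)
The goal is to show that if $l$ and $n$ are distinct primes in $\emph{NM}(\mathcal{F})$, then their product $ln$ lies in $\emph{NM}(\mathcal{F})$; that is, some subset of $X$ is $\mathcal{F}^{ln}$-minimal but fails to be $\mathcal{F}^{j}$-minimal for every $j<ln$. The natural strategy is to locate such a set inside the intersection of one piece of $C_{l}$ with one piece of $C_{n}$, and this is where the non-periodic regularly recurrent point enters: it guarantees (via Lemma \ref{lee}) that $\emph{NM}(\mathcal{F})$ is infinite, so we may pick some $m\in\emph{NM}(\mathcal{F})$ with $m>ln$ whose existence will let us ``manufacture'' a multiple of $ln$ if one is not already available. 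Concretely, I would first produce an element of $\emph{NM}(\mathcal{F})$ that is divisible by $ln$.

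The first step is to find $p\in\emph{NM}(\mathcal{F})$ with $ln\mid p$. Choose $m\in\emph{NM}(\mathcal{F})$ with $m>ln$ (Lemma \ref{lee}). If $ln\mid m$ we are done with this step. Otherwise set $p=\operatorname{lcm}(l,n,m)=\operatorname{lcm}(ln,m)$; then $ln\mid p$ and $p$ is a multiple of $n$ (and of $l$). At this point I want to argue $p\in\emph{NM}(\mathcal{F})$ — but Lemma \ref{leg} gives the converse implication (a multiple of a member of $\emph{NM}(\mathcal{F})$ need not itself lie in $\emph{NM}(\mathcal{F})$ unless it divides something in $\emph{NM}$). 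The correct route is: since $m\in\emph{NM}(\mathcal{F})$, take the cover $C_{m}$ and a member $M\in C_{m}$; by Lemma \ref{lea}(3) applied with $n=p$... this is the crux and I address it below. The cleaner version uses that for a minimal $\mathcal{F}$ and any $k$, the set $X$ itself is $\mathcal{F}^{k}$-minimal-decomposable, and by Lemma \ref{lef}-type reasoning $\emph{NM}(\mathcal{F})$ is closed under taking least common multiples of its elements; granting that, $p=\operatorname{lcm}(ln,m)\in\emph{NM}(\mathcal{F})$ because $ln=\operatorname{lcm}(l,n)\in\emph{NM}(\mathcal{F})$ would already finish the lemma — so instead I argue directly that $\operatorname{lcm}(l,n)=ln\in\emph{NM}(\mathcal{F})$ as follows.

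The heart of the argument: let $A\in C_{l}$ and $B\in C_{n}$ with $A\cap B\neq\emptyset$ (possible since both are covers of $X$). Fix a non-periodic regularly recurrent $x\in A\cap B$. Because $A$ is $\mathcal{F}^{l}$-minimal, $A=\overline{\{\mathcal{F}_{\sigma_{li}}(x)\}}_{\sigma,i\ge 0}$, and likewise $B=\overline{\{\mathcal{F}_{\sigma_{ni}}(x)\}}_{\sigma,i\ge 0}$. Set $N=\overline{\{\mathcal{F}_{\sigma_{lni}}(x)\}}_{\sigma\in\Lambda^{\mathbb{Z}_{+}},\,i\ge 0}$. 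Using that $x$ is regularly recurrent one checks, exactly as in the proof of Lemma \ref{lee}, that $N$ is $\mathcal{F}^{ln}$-minimal. Let $d$ be the least positive integer for which $N$ is $\mathcal{F}^{d}$-minimal; then $d\mid ln$, so since $l,n$ are prime, $d\in\{1,l,n,ln\}$. If $d=1$ then $N=X$, contradicting that $X$ is infinite while $N\subseteq A\subsetneq X$ (here we need $A$ proper, which holds because $l\in\emph{NM}(\mathcal{F})$ forces $C_{l}$ to have $l\ge 2$ pieces). If $d=l$, then $N\in C_{l}$, so $N=A$; but $N\subseteq B$ and $B\in C_{n}$ with $n\ne l$ means $B$ is a union of at least $n$ distinct $\mathcal{F}^{l}$-pieces... more carefully, by Lemma \ref{lef} with the roles reversed, $C_{l}$ does not refine $C_{n}$ unless $n\mid l$, which fails; chasing this, $N=A\subseteq B$ would force $A$ to be $\mathcal{F}^{n}$-invariant of the wrong period, contradicting $n\notin\{$divisors of $l\}$. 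The symmetric argument rules out $d=n$. Hence $d=ln$, so $ln\in\emph{NM}(\mathcal{F})$, which is the claim.

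I expect the main obstacle to be the case analysis $d\in\{1,l,n,ln\}$ — specifically, cleanly excluding $d=l$ and $d=n$. The key leverage is that $A$ and $B$ are simultaneously ``seen'' by $N$, together with Lemma \ref{lef} (refinement only along divisibility) and the minimal-sets-are-disjoint-or-equal principle from Lemma \ref{lea}: if $N$ had period $l$ it would coincide with $A$ and then lie inside a single $\mathcal{F}^{n}$-minimal set $B$, forcing $A$ itself to already be $\mathcal{F}^{n}$-invariant (since $B=\bigcup$ of a cyclic $\mathcal{F}$-orbit of $\mathcal{F}^{n}$-minimal pieces and $A$ is one of them), whence $A$ would be $\mathcal{F}^{\gcd(l,n)}=\mathcal{F}^{1}$-minimal, i.e. $A=X$, the desired contradiction. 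Once that dichotomy is handled, verifying that $N$ is $\mathcal{F}^{ln}$-minimal is a verbatim repetition of the regular-recurrence argument already used in Lemma \ref{lee}, so I would simply cite that.
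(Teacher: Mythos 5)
Your argument is correct and is essentially the paper's own proof: both take an $\mathcal{F}^{l}$-minimal set and an $\mathcal{F}^{n}$-minimal set through a common point and use the fact that any return time of their intersection must be divisible by both primes, hence by $ln$; your case analysis $d\in\{1,l,n,ln\}$ is just a repackaging of that divisibility step, and your explicit verification that the candidate set is genuinely $\mathcal{F}^{ln}$-minimal (via the regularly recurrent point, as in Lemma \ref{lee}) is if anything more careful than the paper, which stops at the return-time computation. The detour in your second paragraph about manufacturing a multiple of $ln$ via least common multiples is unnecessary and rightly abandoned; the direct argument that follows is the one that matches the paper.
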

\begin{proof}
Let $M_{1}$ be a $\mathcal{F}^{l}-$minimal set and $x\in M_{1}$ be arbitrary. There exists a $\mathcal{F}^{n}-$minimal set $M_{2}$ such that, $x\in M_{2}$. Consider $A=M_{1}\cap M_{2}$. If \\$\mathcal{F}^{t}(A)\cap A\neq\emptyset$, for some $t>0$, then $\mathcal{F}^{t}(M_{2})\cap M_{2}\neq\emptyset$. This implies that $t$ is a multiple of $n$. By similar argument for $M_{1}$ we have $\mathcal{F}^{ln}(A)\cap A\neq\emptyset$ and $\mathcal{F}^{i}(A)\cap A=\emptyset$ for all $1\leq i\leq ln-1$.
\end{proof}
In [\cite{[BK]} Theorem 2.4.], the authors prove that:\\
If $f:X\rightarrow X$ is minimal, $X$ is infinite and $f$ has some non-periodic regularly recurrent points, then there is a sequence $\alpha$ of prime numbers and a continuous surjective map $\pi:X\rightarrow  \Delta_{\alpha}$ such that $g_{\alpha}\circ\pi=\pi \circ f$, for all $\lambda\in \Lambda$. \\
In the next theorem, we have a similar result for iterated function systems.
\begin{theorem}\label{ts}
 Suppose that $\mathcal{F}=\{X; f_{\lambda}|\lambda\in\Lambda\} $ is minimal, $X$ is infinite and $\mathcal{F}$ has some non-periodic regularly recurrent points. Then there is a sequence $\alpha$ of prime numbers and a continuous surjective map $\pi:X\rightarrow  \Delta_{\alpha}$ such that $g_{\alpha}\circ\pi=\pi \circ f_{\lambda}$, for all $\lambda\in \Lambda$. Also, $\pi^{-1}(\pi(x))=\{x\}$, for every regularly recurrent point $x\in X$.
\end{theorem}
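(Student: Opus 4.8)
My plan is to mirror the proof of Theorem 2.4 of \cite{[BK]}, replacing the single map by the family $\{f_{\lambda}\}_{\lambda\in\Lambda}$ and feeding in the structural lemmas of this section; throughout write $S$ for the set $\mathit{NM}(\mathcal{F})$ and, for $n\in S$, let $C_{n}$ be the cover of $X$ by the $n$ clopen $\mathcal{F}^{n}$-minimal sets, which $\mathcal{F}$ permutes cyclically (Lemma \ref{lea}). The first task is to construct $\alpha$. By Lemma \ref{lee} the set $S$ is infinite, by Lemma \ref{leg} it is closed under divisors, and by Lemma \ref{leh} it is closed under products of two distinct primes. I would first upgrade these to the statement that $S$ is \emph{directed by divisibility}, i.e.\ any $a,b\in S$ have a common multiple in $S$: one bootstraps Lemma \ref{leh} from single primes to prime powers and then, prime by prime, to $\operatorname{lcm}(a,b)$, invoking divisor-closure at each stage. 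Granting directedness, enumerate $S=\{n_{1},n_{2},\dots\}$ and build primes $\alpha=(j_{1},j_{2},\dots)$ recursively so that, with $m_{k}=j_{1}\cdots j_{k}$, every $m_{k}$ lies in $S$ and every $n_{k}$ divides some $m_{k'}$: from $m_{k-1}\in S$ choose a common multiple $c\in S$ of $m_{k-1}$ and $n_{k}$, then climb from $m_{k-1}$ to $c$ by successive prime multiplications, which is legitimate since every divisor of $c$ again lies in $S$. I expect this step, and in particular the proof that $S$ is directed, to be the main obstacle, since Lemma \ref{leh} as stated covers only two primes.

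Next I would build $\pi$. Fix $x_{0}\in X$. For each $n\in S$, label the members of $C_{n}$ as $C_{n}^{0},\dots,C_{n}^{n-1}$ with $x_{0}\in C_{n}^{0}$ and $f_{\lambda}(C_{n}^{i})=C_{n}^{i+1\,(\mathrm{mod}\,n)}$ for all $\lambda$, and define $\pi_{n}\colon X\to\mathbb{Z}_{n}$ by $\pi_{n}(x)=i$ iff $x\in C_{n}^{i}$; then $\pi_{n}$ is continuous (clopen pieces), surjective (nonempty pieces), and $\pi_{n}\circ f_{\lambda}=\pi_{n}+1$. If $n\mid l$ with $n,l\in S$, Lemma \ref{lef} gives that $C_{l}$ refines $C_{n}$, and since $x_{0}\in C_{l}^{0}\subseteq C_{n}^{0}$ while $f_{\lambda}$ advances both labellings by $1$, the induced map $\mathbb{Z}_{l}\to\mathbb{Z}_{n}$ is reduction mod $n$ and intertwines $\pi_{l}$ with $\pi_{n}$. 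Hence the $\pi_{m_{k}}$ form a compatible tower and assemble into a map $\pi\colon X\to\Delta_{\alpha}$, identifying $\Delta_{\alpha}$ with the inverse limit of $\cdots\to\mathbb{Z}_{m_{2}}\to\mathbb{Z}_{m_{1}}$; concretely $\pi(x)=(r_{1},r_{2},\dots)$ where $\sum_{s\le k}r_{s}\,j_{1}\cdots j_{s-1}\equiv\pi_{m_{k}}(x)\pmod{m_{k}}$. Each digit $r_{i}$ of $\pi(x)$ depends only on the locally constant map $\pi_{m_{i}}$, so $\pi$ is continuous; $\pi(X)$ is compact and surjects onto every $\mathbb{Z}_{m_{k}}$, hence $\pi$ is onto $\Delta_{\alpha}$; and since $g_{\alpha}$ is ``add $1$'' in each coordinate $\mathbb{Z}_{m_{k}}$, the identities $\pi_{m_{k}}\circ f_{\lambda}=\pi_{m_{k}}+1$ give $g_{\alpha}\circ\pi=\pi\circ f_{\lambda}$ for every $\lambda\in\Lambda$.

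Finally, for the injectivity at regularly recurrent points, let $x$ be such a point and suppose $\pi(y)=\pi(x)$. Choose open sets $U\ni x$ and $V\ni y$ with $\overline{U}\cap\overline{V}=\emptyset$. Exactly as in the proof of Lemma \ref{lee}, regular recurrence of $x$ yields $t\ge 1$ with $\mathcal{F}_{\sigma_{ti}}(x)\in U$ for all $i\ge 0$ and all $\sigma\in\Lambda^{\mathbb{Z}_{+}}$, so $M=\overline{\{\mathcal{F}_{\sigma_{ti}}(x):\sigma\in\Lambda^{\mathbb{Z}_{+}},\,i\ge 0\}}$ is an $\mathcal{F}^{t}$-minimal set with $M\subseteq\overline{U}$; letting $n\in S$ be its least period we have $M\in C_{n}$, so $M$ is precisely the piece of $C_{n}$ containing $x$. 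Pick $k$ with $n\mid m_{k}$; from $\pi(x)=\pi(y)$ we get $\pi_{m_{k}}(x)=\pi_{m_{k}}(y)$, and as $C_{m_{k}}$ refines $C_{n}$ the points $x$ and $y$ lie in the same piece of $C_{n}$, namely $M\subseteq\overline{U}$. Then $y\in\overline{U}\cap V$, contradicting the choice of $U$ and $V$, so $\pi^{-1}(\pi(x))=\{x\}$, which finishes the argument.
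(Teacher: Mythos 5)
Your proposal is correct and follows essentially the same route as the paper: build a divisibility-cofinal chain of integers in $NM(\mathcal{F})$ whose successive quotients are primes, use the cyclic clopen covers $C_{n_i}$ to define $\pi$ (the paper does this by invoking the construction from Theorem \ref{tha}, you by an explicit inverse limit of the maps $\pi_{m_k}$ --- the same map), and then run the identical regular-recurrence argument via Lemma \ref{lef} to get $\pi^{-1}(\pi(x))=\{x\}$. The one step you flag as the main obstacle --- upgrading Lemma \ref{leh} from products of two distinct primes to arbitrary partial products $p_{1}\cdots p_{i}$ with repetitions allowed --- is indeed a genuine issue, but it is equally present in the paper, which simply asserts that $n_{i}\in NM(\mathcal{F})$ follows from Lemmas \ref{leg} and \ref{leh}; your explicit acknowledgment of it and sketch of how to bootstrap is added care rather than a divergence from the paper's argument.
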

\begin{proof}
Let  $p$ be a prime number  and $s\in \emph{NM}(\mathcal{F})$. Consider $N(s,p)$ as the multiplicity of $p$ in the prime factorization of $s$ and $N(p)=\max_{s\in S}N(s,p).$ Now assume that $\alpha=(p_{1},p_{2},...)$ is a sequence of primes such that the number of appearance of each prime number $p$ appears in this sequence is exactly $N(p)$. Therefore if we put $n_{i}=p_{1}p_{2}...p_{i}$  then Lemmas \ref{leg} and \ref{leh} imply that \\$n_{i}\in \emph{NM}(\mathcal{F})$, for every positive integers $i$. Consider $\{X_{i,1},...,X_{i,n_{i}}\}$ as $C_{n_{i}}$ cover of $X$. By Lemma \ref{lea}, the statement $(1)$ of Theorem \ref{tha} holds and by Lemma \ref{lea}, the statement $(2)$ of Theorem \ref{tha} holds. So by Theorem \ref{tha} for every $\sigma\in \Lambda^{\mathbb{Z}_{+}}$, there exists a continuous surjective map $\pi:X\rightarrow \Delta_{\alpha}$ such that $g_{\alpha}^{n}\circ \pi=\pi \circ \mathcal{F}_{\sigma_{n}}$ for all $n\geq 1$. Now we show that for every regularly recurrent point $x\in X$, $\pi^{-1}(\pi(x))=\{x\}$.\\
Let $y\in \pi^{-1}(\pi(x))$ and $y\neq x$. So there exists a sequence $V_{1}
\supset V_{2}\supset V_{3}\supset ....$ with $V_{i}\in \{X_{i,1},X_{i,2}, ..., X_{i,m_{i}}\}$ for each $i$, such that $x, y\in \cap _{i\geq 1} V_{i}$.\\ Let  $V$ be an open subset of $X$ such that $x\in V$ and  $y \notin \overline{V}$. Since $x$ is regularly recurrent there is a positive integer $n$ such that
 $\mathcal{F}_{\sigma_{ni}}(x)\in V$ for all $i\geq 1$. So $M=\{\mathcal{F}_{\sigma_{ni}}(x)\}_{\sigma\in \Lambda^{\mathbb{Z}_{+}},i\geq 1}$
is an $\mathcal{F}^{n}-$ minimal set in $\overline{V}$. This immediately implies that there exists  $1\leq t\leq n$ such that $t\in \emph{NM}(\mathcal{F})$
 and $M$ is $\mathcal{F}^{t}-$ minimal set.
On the other hand, there is a positive integer
$j$ such that $n_{j}$ is a multiple of $t$.
So, by Lemma \ref{lef} and this fact that $x\in V_{j}\cap M$ we  have $V_{j}\subset M$,  which contradicts our assumption that $y\in V_{j}$ and $y \notin M$. Thus $\pi^{-1}(\pi(x))=\{x\}$.
\end{proof}
\begin{corollary}
The IFS $\mathcal{F}$ is minimal and each point of $X$ is regularly recurrent if and only if there is a sequence $\alpha$ of prime numbers such that $\mathcal{F}_{\sigma}$ is topologically conjugate to $g_{\alpha}$, for every $\sigma\in  \Lambda^{\mathbb{Z}_{+}}$.
\end{corollary}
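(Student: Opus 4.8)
The plan is to read both implications off Theorems \ref{tha} and \ref{ts}, the only genuinely new work being the transfer of minimality and of regular recurrence across the conjugacy; throughout we may assume $X$ is infinite, the finite case reducing to the elementary observation that a finite minimal IFS in which every point is regularly recurrent must consist of a single map (all $f_\lambda$ coincide) acting as one cyclic permutation, hence conjugate to a finite odometer $g_\alpha$ with $\alpha$ the sequence of prime factors of $|X|$.

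For $(\Leftarrow)$, assume $\alpha=(p_1,p_2,\dots)$ is a sequence of primes with $\mathcal{F}_\sigma$ topologically conjugate to $g_\alpha$ for every $\sigma$. First I would invoke Theorem \ref{tha}: since $\mathcal{F}_\sigma$ is conjugate to $g_\alpha$ for some (indeed every) $\sigma$, conditions $(1)$--$(3)$ of that theorem hold, and the homeomorphism produced in its proof is built only from the $\sigma$-independent covers $P_i$; hence there is a single homeomorphism $h\colon X\to\Delta_\alpha$ with $h\circ\mathcal{F}_{\tau_n}=g_\alpha^{\,n}\circ h$ for all $\tau\in\Lambda^{\mathbb{Z}_+}$ and all $n\ge1$ at once. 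Using this $h$: (a) $\mathcal{F}$ is minimal because $g_\alpha$ is minimal (a classical property of odometers, also visible from the cyclic permutation of the $P_i$ together with condition $(3)$), so for every $x$ the set $\{\mathcal{F}_{\tau_n}(x):n\ge1\}=h^{-1}\!\bigl(\{g_\alpha^{\,n}(h(x)):n\ge1\}\bigr)$ is dense in $X$, a fortiori the full orbit of $x$ is dense; (b) every $x$ is regularly recurrent because a neighbourhood $U$ of $x$ has $h(U)$ containing a depth-$k$ cylinder about $h(x)$, and with $n=m_k:=p_1\cdots p_k$ the carry rule of addition in $\Delta_\alpha$ forces $g_\alpha^{\,ni}(h(x))$ to agree with $h(x)$ in its first $k$ coordinates for all $i\ge0$, so $\mathcal{F}_{\tau_{ni}}(x)=h^{-1}\!\bigl(g_\alpha^{\,ni}(h(x))\bigr)\in U$ for every $i\ge0$ and every $\tau$.

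For $(\Rightarrow)$, assume $\mathcal{F}$ is minimal and every point of $X$ is regularly recurrent. Since $X$ is infinite, $\mathcal{F}$ has a non-periodic (regularly recurrent) point — one checks that a minimal IFS consisting entirely of periodic points is necessarily finite — so Theorem \ref{ts} applies: it gives a sequence $\alpha$ of primes and a continuous surjection $\pi\colon X\to\Delta_\alpha$ with $g_\alpha\circ\pi=\pi\circ f_\lambda$ for all $\lambda$, together with $\pi^{-1}(\pi(x))=\{x\}$ for every regularly recurrent $x$. As every point is regularly recurrent, $\pi$ is injective, hence a homeomorphism ($X$ compact, $\Delta_\alpha$ Hausdorff). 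Iterating $g_\alpha\circ\pi=\pi\circ f_\lambda$ along an arbitrary $\sigma$ yields $g_\alpha^{\,n}\circ\pi=\pi\circ\mathcal{F}_{\sigma_n}$ for all $n\ge1$, so $h:=\pi$ conjugates $\mathcal{F}_\sigma$ to $g_\alpha$ for every $\sigma$.

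The step I expect to be the main obstacle is the $\sigma$-uniformity in $(\Leftarrow)$: the bare hypothesis only supplies, for each $\sigma$, \emph{some} conjugating homeomorphism $h_\sigma$, whereas the definition of regular recurrence quantifies over all $\sigma$ simultaneously with a single integer $n$; this is exactly why one must route through conditions $(1)$--$(3)$ of Theorem \ref{tha}, whose covers $P_i$ are independent of $\sigma$ and thus provide one homeomorphism good for every $\sigma$. A secondary point that needs care is the existence of a non-periodic regularly recurrent point in the infinite case of $(\Rightarrow)$, handled by ruling out a minimal IFS on an infinite space all of whose points are periodic.
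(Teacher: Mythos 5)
The paper states this corollary with no proof at all, leaving it as an immediate consequence of Theorems \ref{tha} and \ref{ts}; your proposal follows exactly that intended route, and both the $(\Leftarrow)$ direction (routing through the $\sigma$-independent covers $P_i$ of Theorem \ref{tha} to get a single conjugating homeomorphism, then reading off minimality from minimality of $g_{\alpha}$ and regular recurrence from the carry rule at times $m_k i$) and the core of $(\Rightarrow)$ (injectivity of $\pi$ from $\pi^{-1}(\pi(x))=\{x\}$ at every point, then compact-to-Hausdorff bijection is a homeomorphism) are correct and correctly identify the one subtle point, namely the $\sigma$-uniformity.

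There are, however, two genuine soft spots in your $(\Rightarrow)$ direction. First, the finite case does not ``reduce'': $\Delta_{\alpha}$ is an infinite product of finite sets with each $j_i\ge 2$, hence a Cantor set, so a finite cyclic system is \emph{never} topologically conjugate to any $g_{\alpha}$ in the sense of Definition \ref{df} --- there is no ``finite odometer'' available here. A finite minimal IFS with every point regularly recurrent (e.g.\ rotation on $\mathbb{Z}_n$) is therefore a counterexample to the literal statement, and the honest fix is to say the corollary implicitly carries the hypothesis that $X$ is infinite, inherited from Theorem \ref{ts}. Second, your parenthetical lemma ``a minimal IFS consisting entirely of periodic points is necessarily finite'' is false for IFS as stated: take $\Lambda=\{1,2\}$, $f_1=g$, $f_2=g^{-1}$ for a minimal homeomorphism $g$ of a Cantor set; the IFS is minimal, $X$ is infinite, and every point is periodic via the word $f_2\circ f_1$. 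To produce the non-periodic regularly recurrent point needed to invoke Theorem \ref{ts} you must genuinely use the hypothesis that \emph{every} point is regularly recurrent (which fails in that example): regular recurrence of $x$ forces all words of length $mi$ to map $x$ into an arbitrarily small $\overline{U}$, which via Lemma \ref{lea} yields $\mathcal{F}^{t}$-minimal clopen decompositions with $t$ unbounded, and this is incompatible with $x$ being periodic of some fixed period on an infinite $X$. That argument (essentially rerunning the proof of Lemma \ref{lee} without presupposing non-periodicity) needs to be written out; as it stands your reduction rests on a false auxiliary claim.
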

Let $g:X\longrightarrow X$ be a continuous map, the $\omega$-limit set of $g$ at $y$ is
$$\omega(y, g)=\bigcap_{m \geq0}(\overline{\bigcup_{k\geq m} g^{k}(y)}).$$
\begin{definition}\cite{[DHSA]}
 Let X be a compact metric space, with $\alpha = (j_{1},j_{2}, . . .)$ a sequence of integers so that $j_{i} \geq 2$ for each $i$.
Denote by
$S_{\alpha}(X) $ the set of all pairs $(y, g)$ in $X \times C(X ,X)$ such that $ (\omega(y, g), g)$ is topologically conjugate to $g_{\alpha}:\Delta_{\alpha}\longrightarrow \Delta_{\alpha}$.
%If $(y, g)\in S_{\alpha}(X)$, we refer to $(\omega(y, g), g)$ as an $\alpha$-adic adding machine.
\end{definition}
\begin{corollary}\cite{[BK]}
Let $\beta = (j_{1}, j_{2}, . . .)$ and $\gamma = (k_{1}, k_{2}, . . .)$ be sequences of integers with
$j_{i} \geq 2$ and $k_{i} \geq 2$ for each $i$. We let $M_{\beta}$ denote a function whose domain is the set of
all prime numbers and which maps to the extended natural numbers ${0, 1, 2, . . .,\infty}$. The
function $M_{\beta}$ is defined by $M_{\beta}(p) =\Sigma_{i=1}^{\infty}n_{i}$
where $n_{i}$ is the power of the prime $p$ in the prime factorization of $j_{i}$.
Then $f_{\beta}$ and $f_{\gamma}$ are topologically conjugate if and only if the functions $M_{\beta}$ and $M_{\gamma}$ are
equal.
\end{corollary}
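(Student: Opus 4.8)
This is the classification of adding machines up to topological conjugacy by their associated supernatural (Steinitz) number, and the proof naturally splits into the two implications. Throughout I write $m_{i}=j_{1}\cdots j_{i}$ and $m'_{i}=k_{1}\cdots k_{i}$, let $v_{p}(\cdot)$ denote the exponent of a prime $p$ in a factorization, and record that $M_{\beta}(p)=\sup_{i}v_{p}(m_{i})$ and $M_{\gamma}(p)=\sup_{i}v_{p}(m'_{i})$ (these suprema being possibly $\infty$). I also write $g_{\beta},g_{\gamma}$ for the maps denoted $f_{\beta},f_{\gamma}$, and I will use that each $g_{\alpha}$ is minimal, so that the one-map \textbf{IFS} $\{\Delta_{\alpha};g_{\alpha}\}$ is a minimal \textbf{IFS}.

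The plan for $(\Leftarrow)$ is to deduce the conjugacy from Theorem \ref{tha}. Assuming $M_{\beta}=M_{\gamma}$, I verify that the one-map \textbf{IFS} $\{\Delta_{\beta};g_{\beta}\}$ satisfies conditions $(1)$--$(3)$ of Theorem \ref{tha} with the sequence $\gamma$ in the role of $\alpha$; its conclusion is then exactly a homeomorphism conjugating $g_{\beta}$ to $g_{\gamma}$. To supply the data: for each $i$, since $v_{p}(m'_{i})\le M_{\gamma}(p)=M_{\beta}(p)=\sup_{l}v_{p}(m_{l})$ and only finitely many primes divide $m'_{i}$, I can choose indices $l_{1}\le l_{2}\le\cdots$ with $m'_{i}\mid m_{l_{i}}$. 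The $m_{l_{i}}$ cylinders of $\Delta_{\beta}$ determined by the first $l_{i}$ coordinates are clopen, are cyclically permuted by $g_{\beta}$, and carry a natural indexing by $\mathbb{Z}/m_{l_{i}}$ under which $g_{\beta}$ acts by adding $1$; grouping them along the reduction $\mathbb{Z}/m_{l_{i}}\to\mathbb{Z}/m'_{i}$ gives a clopen cover $P_{i}$ of $\Delta_{\beta}$ by $m'_{i}$ sets cyclically permuted by $g_{\beta}$, which is condition $(1)$, and taking the $l_{i}$ increasing makes $P_{i+1}$ refine $P_{i}$, which is condition $(2)$. For condition $(3)$, a nested sequence $V_{1}\supset V_{2}\supset\cdots$ with $V_{i}\in P_{i}$ encodes a point of the inverse limit of the groups $\mathbb{Z}/m'_{i}$, i.e.\ of $\Delta_{\gamma}$, and $\bigcap_{i}V_{i}$ is the set of $x\in\Delta_{\beta}$ reducing to that point under the compatible projections $\Delta_{\beta}\to\mathbb{Z}/m'_{i}$; this set is a singleton because the intersection of the clopen subgroups $m'_{i}\Delta_{\beta}$ is trivial, which follows from $v_{p}(m'_{i})\uparrow M_{\gamma}(p)=M_{\beta}(p)$ for every $p$ (if $M_{\beta}(p)<\infty$ the $p$-part is killed at a finite stage; if $M_{\beta}(p)=\infty$ the $p$-adic valuations diverge). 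With $(1)$--$(3)$ checked, Theorem \ref{tha} finishes this direction.

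The plan for $(\Rightarrow)$ is to read $M_{\alpha}$ off a purely dynamical invariant. Assume $h\colon\Delta_{\beta}\to\Delta_{\gamma}$ is a homeomorphism with $h\circ g_{\beta}=g_{\gamma}\circ h$, and consider the set $\emph{NM}(g_{\alpha})$ from the start of Section $4$. Because it is phrased entirely via minimality of the iterates $g_{\alpha}^{n}$, it is a conjugacy invariant, so $h$ forces $\emph{NM}(g_{\beta})=\emph{NM}(g_{\gamma})$. It remains to compute it. By the corollary following Lemma \ref{lea}, for $n>1$ we have $n\in\emph{NM}(g_{\alpha})$ if and only if there is a continuous $\psi\colon\Delta_{\alpha}\to\mathbb{Z}_{n}$ with $\psi\circ g_{\alpha}=\psi+1\pmod{n}$; by uniform continuity on the compact zero-dimensional space $\Delta_{\alpha}$ such a $\psi$ is constant on the cylinders determined by the first $i$ coordinates for some $i$, hence factors as $\psi=\bar{\psi}\circ\pi_{i}$ with $\pi_{i}\colon\Delta_{\alpha}\to\mathbb{Z}/m_{i}$ the reduction intertwining $g_{\alpha}$ with addition of $1$, and the equivariance of $\psi$ then forces $\bar{\psi}$ to be a translate of the reduction $\mathbb{Z}/m_{i}\to\mathbb{Z}/n$, which exists only if $n\mid m_{i}$; conversely $n\mid m_{i}$ makes that composite work. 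Thus $\emph{NM}(g_{\alpha})=\{\,n\ge1:\ v_{p}(n)\le M_{\alpha}(p)\ \text{for every prime }p\,\}$, so $M_{\alpha}(p)=\sup\{\,k\ge0:\ p^{k}\in\emph{NM}(g_{\alpha})\,\}$, and $\emph{NM}(g_{\beta})=\emph{NM}(g_{\gamma})$ yields $M_{\beta}=M_{\gamma}$.

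I expect the main obstacle to lie in condition $(3)$ of Theorem \ref{tha} in the direction $(\Leftarrow)$: showing that the nested intersections collapse to points is precisely the step that uses the full equality $M_{\beta}=M_{\gamma}$. One inequality ($M_{\beta}\ge M_{\gamma}$) is what lets the projections $\Delta_{\beta}\to\mathbb{Z}/m'_{i}$ exist and separate points; if one instead wants to produce the conjugacy directly as the resulting continuous homomorphism $\Delta_{\beta}\to\Delta_{\gamma}$ sending $(1,0,0,\dots)$ to $(1,0,0,\dots)$, then the reverse inequality is exactly what makes that homomorphism onto, via density of the $g_{\gamma}$-orbit of $0$, i.e.\ minimality of $g_{\gamma}$. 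A more routine point to be careful with in $(\Rightarrow)$ is that $\psi$ genuinely factors through a finite coordinate level, and that the displayed formula for $\emph{NM}(g_{\alpha})$ is correct when some $M_{\alpha}(p)=\infty$ (then $p^{k}\in\emph{NM}(g_{\alpha})$ for every $k$, so the supremum is $\infty$, as it must be). The degenerate case $M_{\alpha}\equiv0$ cannot occur since every $j_{i}\ge2$.
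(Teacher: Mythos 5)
The paper offers no proof of this statement at all---it is imported verbatim from \cite{[BK]}---so there is no internal argument to compare against; judged on its own, your proposal is correct and is essentially the standard Block--Keesling classification argument. For $(\Leftarrow)$ the reduction to Theorem \ref{tha} is sound: the choice of $l_{i}$ with $m'_{i}\mid m_{l_{i}}$ uses only $M_{\gamma}\le M_{\beta}$ together with the finiteness of the set of primes dividing $m'_{i}$; the induced mod-$m'_{i}$ groupings of cylinders are clopen and cyclically permuted in the indexed sense of the paper's definition, and taking the $l_{i}$ increasing makes the reductions $\mathbb{Z}/m'_{i+1}\to\mathbb{Z}/m'_{i}$ compatible with the cylinder refinements, which is condition $(2)$; and the collapse of nested intersections to points in condition $(3)$ is exactly where the reverse inequality $M_{\beta}\le M_{\gamma}$ enters (for each $l$ there is $i$ with $m_{l}\mid m'_{i}$, so the compatible projections onto the $\mathbb{Z}/m'_{i}$ separate points of $\Delta_{\beta}$). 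For $(\Rightarrow)$, observing that $\emph{NM}(g_{\alpha})$ is a conjugacy invariant and computing it as $\{n\ge 1:\ n\mid m_{i}\ \text{for some}\ i\}$, whence $M_{\alpha}(p)=\sup\{k:\ p^{k}\in\emph{NM}(g_{\alpha})\}$, is a clean invariant-theoretic formulation; the only steps worth writing out in full are that a continuous $\psi\colon\Delta_{\alpha}\to\mathbb{Z}_{n}$ is constant on cylinders of some finite level (compactness, as you note) and that equivariance then forces $\bar{\psi}$ to be a translate of a group reduction, which exists precisely when $n\mid m_{i}$. Your closing remarks about where each inequality is used, and about the case $M_{\alpha}(p)=\infty$, are accurate. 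Nothing is missing.
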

Then we have the following corollary.
\begin{corollary}
Suppose that the IFS $\mathcal{F}$ is minimal and each point of $X$ is regularly recurrent. There is a sequence $\alpha$ of prime numbers such that for every sequence $\beta$ of prime numbers,  $M_{\beta}=M_{\alpha}$ implies that $\mathcal{F}_{\sigma}$ is topologically conjugate to $g_{\beta}$, for every $\sigma\in  \Lambda^{\mathbb{Z}_{+}}$.
\end{corollary}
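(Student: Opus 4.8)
The plan is to obtain the statement by composing the two corollaries that immediately precede it, using transitivity of topological conjugacy. Take the sequence $\alpha$ to be the one furnished by the corollary that characterizes minimal IFS all of whose points are regularly recurrent: since $\mathcal{F}$ is minimal and every point of $X$ is regularly recurrent, there is a sequence $\alpha$ of primes such that for every $\sigma\in\Lambda^{\mathbb{Z}_{+}}$ there is a homeomorphism $h_{\sigma}:X\to\Delta_{\alpha}$ with $g_{\alpha}^{n}\circ h_{\sigma}=h_{\sigma}\circ\mathcal{F}_{\sigma_{n}}$ for all $n\geq1$. (Equivalently, this $\alpha$ is the prime sequence produced in Theorem \ref{ts}.) This $\alpha$ will be the sequence asserted in the corollary.

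Next I would bring in the Block--Keesling conjugacy criterion quoted from \cite{[BK]}. Fix any sequence $\beta$ of primes with $M_{\beta}=M_{\alpha}$. Here one line of bookkeeping is needed: when the entries of $\alpha$ are themselves primes, the power of a prime $p$ in an entry $j_i$ is $1$ if $j_i=p$ and $0$ otherwise, so $M_{\alpha}(p)$ is simply the number of occurrences of $p$ in $\alpha$ — exactly the multiplicity $N(p)$ used in the proof of Theorem \ref{ts}. Thus $M_{\beta}=M_{\alpha}$ is precisely the hypothesis of the \cite{[BK]} corollary, which yields a homeomorphism $k:\Delta_{\alpha}\to\Delta_{\beta}$ with $g_{\beta}\circ k=k\circ g_{\alpha}$, and hence, by an immediate induction, $g_{\beta}^{n}\circ k=k\circ g_{\alpha}^{n}$ for all $n\geq1$.

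Finally I would assemble the conjugacy. For each $\sigma\in\Lambda^{\mathbb{Z}_{+}}$ the composite $k\circ h_{\sigma}:X\to\Delta_{\beta}$ is a homeomorphism, and for every $n\geq1$
$$g_{\beta}^{n}\circ(k\circ h_{\sigma})=k\circ g_{\alpha}^{n}\circ h_{\sigma}=k\circ h_{\sigma}\circ\mathcal{F}_{\sigma_{n}},$$
so by the definition of conjugacy of $\mathcal{F}_{\sigma}$ to a map, $\mathcal{F}_{\sigma}$ is topologically conjugate to $g_{\beta}$. As $\sigma$ was arbitrary, this is the assertion of the corollary.

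There is no genuine obstacle in this argument — it is a two-step transitivity of conjugacy — and the only place calling for a moment's care is the identification described in the second paragraph, namely that for a prime sequence $\alpha$ the function $M_{\alpha}$ appearing in the \cite{[BK]} corollary coincides with the occurrence-count/multiplicity data used to build $\alpha$ in Theorem \ref{ts}, so that ``$M_{\beta}=M_{\alpha}$'' is truly the hypothesis of that criterion. Once this is noted the rest is formal.
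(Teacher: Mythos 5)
Your proposal is correct and follows exactly the route the paper intends: take $\alpha$ from the preceding corollary (minimal plus all points regularly recurrent gives conjugacy of each $\mathcal{F}_{\sigma}$ to $g_{\alpha}$), apply the quoted Block--Keesling criterion to get a conjugacy $k$ between $g_{\alpha}$ and $g_{\beta}$ when $M_{\beta}=M_{\alpha}$, and compose. The paper offers no written proof beyond this chain of citations, and your extra remark identifying $M_{\alpha}(p)$ with the multiplicity $N(p)$ for a prime sequence $\alpha$ is a worthwhile clarification rather than a deviation.
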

Let $\alpha = (j_{1},j_{2}, . . .)$ a sequence of integers so that $j_{i} \geq 2$ for each $i$. In \cite{[DHS]}, the authors prove that  if for every prime number $p$, $M_{\alpha}(p)<\infty$  then  $S_{\alpha}(X)$ is a nowhere dense subset of  $ X �\times C(X,X)$.\\
So, by considering Lemmas \ref{lef} and \ref{leg} and the sequence $\alpha$ in the proof of Theorem \ref{ts} we have the following result.
\begin{corollary}
Suppose that the IFS $\mathcal{F}$ is minimal and each point of $X$ is regularly recurrent. There exists  a sequence $\alpha$ of prime numbers such that the set of all
$(y, g) \in X �\times C(X ,X)$ which $g:\omega(y, g)\longrightarrow \omega(y, g)$, for every $\sigma\in  \Lambda^{\mathbb{Z}_{+}},$ is topologically conjugate to $\mathcal{F}_{\sigma} $ is a nowhere dense subset of  $ M \times C(M,M).$
\end{corollary}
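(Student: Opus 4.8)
The plan is to recognise the set appearing in the statement as the set $S_{\alpha}(X)$ from the definition preceding this corollary, for one fixed sequence $\alpha$ of primes, and then to quote the nowhere-density theorem of \cite{[DHS]}.

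First I would produce $\alpha$. Since $\mathcal{F}$ is minimal and every point of $X$ is regularly recurrent, the corollary immediately following Theorem \ref{ts} furnishes a sequence $\alpha=(p_{1},p_{2},\ldots)$ of primes --- namely the one built in the proof of Theorem \ref{ts}, in which each prime $p$ occurs exactly $N(p)=\max_{s\in NM(\mathcal{F})}N(s,p)$ times --- together with, for every $\sigma\in\Lambda^{\mathbb{Z}_{+}}$, a homeomorphism $h_{\sigma}:X\to\Delta_{\alpha}$ satisfying $g_{\alpha}^{n}\circ h_{\sigma}=h_{\sigma}\circ\mathcal{F}_{\sigma_{n}}$ for all $n\geq1$. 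In particular every $\mathcal{F}_{\sigma}$ is topologically conjugate to the single adding machine map $g_{\alpha}$, and $M_{\alpha}(p)=N(p)$ for every prime $p$.

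Next I would carry out the reduction. Fix $(y,g)\in X\times C(X,X)$ and put $Y=\omega(y,g)$; this is an invariant set, so $(Y,g|_{Y})$ is a genuine dynamical system. If $g|_{Y}$ is topologically conjugate to $\mathcal{F}_{\sigma}$ for one $\sigma$, say via a homeomorphism $k:X\to Y$ with $(g|_{Y})^{n}\circ k=k\circ\mathcal{F}_{\sigma_{n}}$ for all $n$, then $k\circ h_{\sigma}^{-1}:\Delta_{\alpha}\to Y$ conjugates $g_{\alpha}$ to $g|_{Y}$, so $(y,g)\in S_{\alpha}(X)$; conversely, composing any conjugacy $\Delta_{\alpha}\to Y$ with the various $h_{\sigma}$ shows that $(y,g)\in S_{\alpha}(X)$ forces $g|_{Y}$ to be conjugate to $\mathcal{F}_{\sigma}$ simultaneously for \emph{all} $\sigma$. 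Thus the set described in the corollary is exactly $S_{\alpha}(X)$, and the universal quantifier over $\sigma$ costs nothing precisely because all the $\mathcal{F}_{\sigma}$ lie in the single conjugacy class of $g_{\alpha}$.

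Finally I would invoke \cite{[DHS]}: provided $M_{\alpha}(p)<\infty$ for every prime $p$, the set $S_{\alpha}(X)$ is a nowhere dense subset of $X\times C(X,X)$, which is the assertion of the corollary. Verifying this finiteness is the step I expect to be the main obstacle, and it is where Lemmas \ref{lef} and \ref{leg} are meant to enter: Lemma \ref{leg} shows that $NM(\mathcal{F})$ is closed under passing to divisors, and Lemma \ref{lef} shows that the minimal covers $C_{n}$, $n\in NM(\mathcal{F})$, refine one another compatibly, so that the chain $C_{n_{1}}\preceq C_{n_{2}}\preceq\cdots$ from the proof of Theorem \ref{ts} exhausts all of them and $M_{\alpha}=N$ is a well-defined arithmetic invariant of $\mathcal{F}$. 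Once one knows $N(p)<\infty$ for every $p$ --- equivalently, that for each prime $p$ only finitely many powers of $p$ lie in $NM(\mathcal{F})$ --- the cited theorem applies word for word and the proof is complete.
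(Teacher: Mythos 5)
Your route is exactly the paper's: the paper offers no argument beyond the remark that the corollary follows ``by considering Lemmas \ref{lef} and \ref{leg} and the sequence $\alpha$ in the proof of Theorem \ref{ts},'' i.e.\ one is meant to identify the set in question with $S_{\alpha}(X)$ for the $\alpha$ produced by Theorem \ref{ts} and then quote the nowhere-density theorem of \cite{[DHS]}. Your reduction of the universal quantifier over $\sigma$ (all $\mathcal{F}_{\sigma}$ lie in the single conjugacy class of $g_{\alpha}$, so the set is literally $S_{\alpha}(X)$) is a correct and useful piece of bookkeeping that the paper omits.

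However, the step you flag as ``the main obstacle'' --- verifying $M_{\alpha}(p)=N(p)<\infty$ for every prime $p$ --- is a genuine gap, and it is not closed by Lemmas \ref{lef} and \ref{leg}, nor by anything else in the paper. Indeed it can fail: if $\mathcal{F}$ consists of the single dyadic adding machine $g_{(2,2,\ldots)}$ acting on $\Delta_{(2,2,\ldots)}$ (a minimal system in which every point is regularly recurrent), then every power $2^{k}$ lies in $NM(\mathcal{F})$, so $N(2)=\max_{s}N(s,2)=\infty$ and the hypothesis of the cited theorem of \cite{[DHS]} is violated. So your proposal is faithful to the paper's intended argument but, like the paper, leaves the decisive finiteness hypothesis unverified; as written, neither establishes the corollary in the generality claimed. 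If you want to salvage the statement you would need either a different result from \cite{[DHS]} that dispenses with the finiteness of $M_{\alpha}(p)$, or an additional hypothesis on $\mathcal{F}$ guaranteeing that for each prime $p$ only finitely many powers of $p$ occur in $NM(\mathcal{F})$.
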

In [\cite{[SO]}, Corollary 4.3] the authors prove that if $f:X\rightarrow X$ is a sensitive non-wandering dynamical system with the shadowing property then the set of non-periodic regularly points of $f$ is  dense in  $X$.  So we have the following result.

\begin{theorem}\label{tht}
Suppose that $f:X\rightarrow X$ is sensitive and minimal map with the shadowing property. Then there is a sequence $\alpha$ of prime numbers and a continuous map $\pi:X\rightarrow  \Delta_{\alpha}$ such that $g_{\alpha}\circ\pi=\pi \circ f$ and $\pi$ is one to one on a dense subset of $X$.
\end{theorem}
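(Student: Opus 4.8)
The plan is to apply Theorem \ref{ts} to the one-element iterated function system $\mathcal{F}=\{X;f\}$, after checking that this IFS meets the three hypotheses of that theorem.

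First I would verify that $\mathcal{F}$ is minimal in the sense of the paper: for a one-element family $\mathcal{F}^{1}=\{X;f\}$, a subset $M\subseteq X$ is $\mathcal{F}^{1}$-minimal precisely when it is $f$-minimal, so the hypothesis that $f$ is a minimal map says exactly that the only $\mathcal{F}^{1}$-minimal set is $X$, i.e. $\mathcal{F}$ is minimal. Next, since $f$ is minimal every orbit is dense, so every point of $X$ is non-wandering; thus $\Omega(f)=X$ and $(X,f)$ is a non-wandering dynamical system. Finally, sensitivity rules out isolated points: if $\{x\}$ were open, then $\mathrm{diam}(f^{n}(\{x\}))=0$ for all $n$, contradicting the sensitivity constant. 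Hence the nonempty space $X$ is perfect, and in particular infinite, so the phrase ``$X$ is infinite'' in the hypotheses of Theorem \ref{ts} is satisfied.

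Now I would invoke the cited fact [\cite{[SO]}, Corollary 4.3]: because $(X,f)$ is sensitive, non-wandering, and has the shadowing property, the set $R$ of non-periodic regularly recurrent points of $f$ is dense in $X$. In particular $\mathcal{F}$ possesses a non-periodic regularly recurrent point, so all hypotheses of Theorem \ref{ts} hold. That theorem then yields a sequence $\alpha$ of primes and a continuous surjection $\pi:X\to\Delta_{\alpha}$ with $g_{\alpha}^{n}\circ\pi=\pi\circ f^{n}$ for all $n\geq 1$; taking $n=1$ gives $g_{\alpha}\circ\pi=\pi\circ f$. Moreover Theorem \ref{ts} guarantees $\pi^{-1}(\pi(x))=\{x\}$ for every regularly recurrent point $x\in X$. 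Hence for $x,y\in R$ with $\pi(x)=\pi(y)$ we obtain $y\in\pi^{-1}(\pi(x))=\{x\}$, so $x=y$; that is, $\pi$ is injective on the dense set $R$, which is precisely the assertion that $\pi$ is one-to-one on a dense subset of $X$.

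The content here is essentially a matter of lining up definitions so that Theorem \ref{ts} and the recurrence result of \cite{[SO]} both apply; the only steps needing a word of care are the passage from the map $f$ to the IFS $\{X;f\}$ and the elementary observation that a sensitive system on a nonempty metric space is infinite. I do not anticipate a serious obstacle beyond that bookkeeping.
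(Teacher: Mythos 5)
Your proposal is correct and follows essentially the same route as the paper: verify that the one-map IFS $\{X;f\}$ satisfies the hypotheses of Theorem \ref{ts}, invoke Corollary 4.3 of \cite{[SO]} to get a dense set of non-periodic regularly recurrent points, and then read off the semiconjugacy and the injectivity of $\pi$ on that dense set. If anything you are slightly more careful than the paper, since you explicitly derive the infiniteness of $X$ from sensitivity, a hypothesis of Theorem \ref{ts} that the paper's proof uses only implicitly.
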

\begin{proof}
Minimality of $f$ implies that $\overline{\{f^{n}(x)\}_{n\geq 0}}=X$ for all $x\in X$.  So $f$ is a non-wandering system and does not have any periodic point. Then by Corollary 4.3. of \cite{[SO]} the set of all regularly recurrent points is dense in $X$. Consequently Theorem \ref{ts} (in the case of $\Lambda$ has only one element), implies that there is a sequence $\alpha$ of prime numbers and a continuous map $\pi:X\rightarrow  \Delta_{\alpha}$ such that $g_{\alpha}\circ\pi=\pi \circ f$ and $\pi$ is one to one on the set of all regularly recurrent points which a dense subset of $X$.
\end{proof}
 So it seems plausible
to put forward the following conjectures.\\
\textbf{Conjecture $1$:}
 Suppose that $f:X\rightarrow X$ is a sensitive  minimal map with the shadowing property. Then there is a sequence $\alpha$ of prime numbers and a homeomorphism map $\pi:X\rightarrow  \Delta_{\alpha}$ such that $g_{\alpha}\circ\pi=\pi \circ f$.
\\
\textbf{Conjecture $2$}:
 Suppose that $\mathcal{F}=\{X; f_{\lambda}|\lambda\in\Lambda\} $   is a sensitive minimal IFS map with the shadowing property. Then there is a sequence $\alpha$ of prime numbers and a homeomorphism map $\pi:X\rightarrow  \Delta_{\alpha}$ such that $g_{\alpha}\circ\pi=\pi \circ f_{\lambda}$, for all $\lambda\in \Lambda$.

\end{document}